\theoremstyle{plain}
\newtheorem{thm}{Theorem}[section]
\newtheorem{lem}[thm]{Lemma}
\newtheorem{cor}[thm]{Corollary}
\newtheorem{prop}[thm]{Proposition}
\newtheorem{clm}[thm]{Claim}
\theoremstyle{definition}
\newtheorem{exs}[thm]{Examples}
\newtheorem{ex}[thm]{Example}
\newtheorem{rem}[thm]{Remark}
\newtheorem{dfn}[thm]{Definition}
\newcommand{\xra}[1]{\xrightarrow{#1}} 
\newcommand{\zz}{\mathbb{Z}} 
\newcommand{\nn}{\mathbb{N}}
\newcommand{\qq}{\mathbb{Q}} 
\newcommand{\A}{\mathrm{A}}
\newcommand{\B}{\mathrm{B}}
\newcommand{\D}{\mathrm{D}}
\newcommand{\E}{\mathrm{E}}
\newcommand{\F}{\mathrm{F}_4}
\newcommand{\pr}{\mathrm{pr}}
\newcommand{\res}{\mathrm{res}}
\newcommand{\rat}{\mathrm{rat}}
\newcommand{\BX}{\overline{X}}
\newcommand{\BG}{\overline{G}}
\newcommand{\rk}{\mathrm{rk}}
\newcommand{\ind}{\mathrm{ind}}
\newcommand{\pt}{\mathrm{pt}}
\newcommand{\an}{\mathrm{an}}
\newcommand{\w}{\bar{\omega}}
\DeclareMathOperator{\SB}{\mathrm{SB}}
\DeclareMathOperator{\Aut}{\mathrm{Aut}}
\DeclareMathOperator{\PGL}{\mathrm{PGL}}
\DeclareMathOperator{\SL}{\mathrm{SL}}
\DeclareMathOperator{\Mot}{\mathcal{M}}
\DeclareMathOperator{\Spec}{\mathrm{Spec}}
\DeclareMathOperator{\CH}{\mathrm{CH}}
\DeclareMathOperator{\Ch}{\mathrm{Ch}}
\DeclareMathOperator{\CHO}{\overline{\CH}}
\DeclareMathOperator{\ChO}{\overline{\Ch}}
\DeclareMathOperator{\Pic}{\mathrm{Pic}}
\title{Higher Tits indices of linear algebraic groups}
\author{Viktor Petrov\footnote{Supported partially by INTAS; project 03-51-3251.},
Nikita Semenov\footnote{Supported partially by DFG, project GI706/1-1.}}
\date{}
\begin{document}
\maketitle
\begin{abstract}
Let $G$ be a semisimple algebraic group over a field $k$.
We introduce the higher Tits indices of $G$ as the set of all
Tits indices of $G$ over all field extensions $K/k$.
In the context of quadratic forms this notion coincides with
the notion of the higher Witt indices introduced by M.~Knebusch and
classified by N.~Karpenko and A.~Vishik.
We classify the higher Tits indices for exceptional algebraic
groups. Our main tools include the Chow groups and the Chow motives
of projective homogeneous varieties, Steenrod operations, and
the notion of the $J$-invariant introduced in \cite{PSZ07}. 
\end{abstract}

\section{Introduction}
Let $G$ denote a semisimple algebraic group of inner type defined over a field $k$. 
In his famous paper \cite{Ti66} J.~Tits
defined the {\it Tits index} of $G$ as the data
consisting of the Dynkin diagram of $G$ with some vertices being circled.
Let $K$ be an arbitrary field extension of $k$. 
In the present paper we investigate the following problem:
What are the possible values of the Tits index of the group $G_K$? 

In the theory of central simple algebras (i.e., when
$G$ is a group of type $\A_n$) this problem is equivalent
to the index reduction formula of A.~Blanchet, A.~Schofield,
and M.~Van den Bergh (see \cite{SVB92}). Later their result was generalized
by A.~Merkurjev, I.~Panin, and A. Wadsworth (see \cite{MPW96} and \cite{MPW98}).

In the theory of quadratic forms (i.e, when $G$
is an orthogonal group) the above problem is equivalent to the
study of the {\it higher Witt indices} of quadratic forms.
The higher Witt indices were introduced by M.~Knebusch \cite{Kn76}.
They provide a nice discrete invariant of quadratic forms over any
field $k$. Numerous results have been obtained so far.
One of the main achievements here is the celebrated result
of N.~Karpenko \cite{Ka03}, where he proves Hoffmann's conjecture
about all possible values of the first higher Witt index.
One should also mention the interesting papers of N.~Karpenko, A.~Merkurjev
and A.~Vishik \cite{Ka04}, \cite{KM03}, \cite{Vi04}, \cite{Vi06}
concerning closely related problems in the theory of quadratic forms.

The main result of M.~Knebusch in his paper \cite{Kn76} asserts that one
can always consider only a certain finite number of field
extensions $K_i/k$, $i=0,\ldots,h$ such that the Tits index of $G$ over
a field $K/k$ equals one of the Tits indices of $G_{K_i}$.
The fields $K_i$ appearing in the Knebusch theory are the fields of
rational functions on certain projective $G$-homogeneous varieties
(see Section~\ref{tits} below).

Moreover, the result of Karpenko \cite[Theorem~2.6]{Ka04} asserts that
information on the higher Tits indices is hidden in a subring of the
Chow ring of certain projective $G$-homogeneous varieties\footnote{His result
concerns only quadrics, but can be straightforwardly generalized to arbitrary
projective homogeneous varieties.}.
In the present paper we exploit this connection further. Our main tools
include 
the Steenrod operations in the Chow theory, 
Tits' classification, Chow motives, and
motivic invariants, like the {\it $J$-invariant} of algebraic groups
introduced in \cite{PSZ07}.

It turns out, that in the most cases
the splitting behaviour of an algebaic group $G$ depends not
on the base field $k$, but on triviality or non-triviality
of a certain discrete invariant of $G$ called
the $J$-invariant. By definition this invariant measures the ``size''
of the subring of rational cycles in the Chow ring of the $G$-variety
of complete flags (see Section~\ref{secj}). But usually
the (non)-triviality of this invariant can be expressed in terms of the
(non)-triviality of the
Tits algebras of $G$ and/or of certain cohomological invariants of $G$,
like the Rost invariant.

In Section~\ref{sec6} we study the existence of the anisotropic kernels
of type $\D_6$ in the groups of type $\E_7$ over field extensions
$K/k$ of the base field (``index reduction formula'' for groups
of type $\E_7$). We prove that this problem is equivalent to the
problem of existence of zero-cycles of degree $1$ on certain anisotropic
projective homogeneous varieties. The latter problem has a long history
starting with the paper \cite{Serre} and obviously has a positive answer
(in the sense that an anisotropic projective variety does not have a zero-cycle
of degree $1$) over fields $k$ whose absolute Galois group
$\mathrm{Gal}(k_s/k)$ a pro $p$-group. We refer the reader to papers
\cite{Fl04}, \cite{Par05}, \cite{To04} that discuss this problem.
In the situation of $\E_7$ this problem was solved in article \cite{GS}
that was partially motivated by the results of the present paper. 

The main results of our paper are Theorems~\ref{thm1} and \ref{e7thm} that
allow to compute all possible higher
Tits indices for groups of type $\F$, $^1{\E_6}$, and $\E_7$ with trivial
Tits algebras, and to classify
all generically cellular varieties of exceptional types.

One suprising corollary of our results is a very short proof of the
triviality of the kernel of the Rost invariant for groups of type
$\E_7$ (see Corollary~\ref{coro2}). The arguments of that proof
belong to S.~Garibaldi.

The present paper contains an Appendix due to M.~Florence who shows
in a uniform way
that for all Dynkin types there exist a group whose set of higher
Tits indices is maximal possible (see Theorem~\ref{florence}).

The main goal of the present paper is to study general restrictions on the
splitting behaviour of algebraic groups, i.e., those restrictions
which don't depend on the base field.

\subparagraph{Acknowledgements} \mbox{}

We are sincerely grateful to S.~Garibaldi, Ph.~Gille, N.~Karpenko, and F.~Morel
for numerous interesting discussions on the subject of the present paper.

\section{Tits' classification and Knebusch theory}\label{tits}
We recall the definition and basic properties of the Tits indices of semisimple
algebraic groups following J.~Tits \cite{Ti66} and \cite{Ti71}.

Let $k$ be a field, $k_s$ be a separable closure of $k$, $G$ a semisimple
algebraic group defined over $k$, $S$ a maximal split torus of $G$
defined over $k$, $T$ a maximal torus containing $S$ and defined over $k$,
$\Delta=\Delta(G)$ the system of simple roots of $G$ with respect to $T$ and
$\Delta_0=\Delta_0(G)$ the subsystem of those roots which vanish on $S$.

In the present paper we consider only groups of {\it inner type}, i.e., groups
that are twisted forms of a split group by means of a $1$-cocycle
in $H_{\mathrm{et}}^1(k,G_0)$, where $G_0$ denotes the split adjoint group over $k$
of the same type as $G$. Equivalently, this means that the $*$-action
(see \cite{Ti66}) of the absolute Galois group on $\Delta$ is trivial.
Therefore we don't define the $*$-action and don't include it in the definition
of the Tits index of $G$.

The {\it index} of $G$ is a pair $(\Delta,\Delta_0)$. We represent the index
as the Dynkin diagram of $G$ with the vertices that don't belong to
$\Delta_0$ being circled.

There exists a certain subgroup of $G$
called the {\it semisimple anisotropic kernel}.
We refer the reader to \cite{Ti66} for its definition. Note that
the index of the semisimple anisotropic kernel of $G$ can be easiely
reduced from the index of $G$ by removing the vertices of the Dynkin diagram
which are circled.

To any semisimple group over $k$ one can functorially associate certain central
simple algebras, called the {\it Tits algebras}.
We refer the reader to \cite{Ti71} for a definition and description.

\begin{exs}
1. Let $A$ be a central simple $k$-algebra of degree $n+1$ and $G=\SL_1(A)$
the respective group of type $\A_n$. Then the index of $A$ equals
$\dfrac{n+1}{r+1}$, where $r$ is the number of circled vertices
on the Tits diagram of $G$.

The Tits algebras of $G$ are the $\lambda$-powers $\lambda^iA$, $i=1,\ldots,n$.

2. Let $(V,q)$ be a regular odd-dimensional quadratic space over $k$ and
$G=\mathrm{Spin}(V,q)$ be the respective group of type $\B_n$.
Then the number of circled vertices on its Tits diagram equals the Witt
index of $q$.

The Tits algebra of $G$ is the even Clifford algebra $C_0(V,q)$.
\end{exs}
One can give similar descriptions for all semisimple algebraic groups over $k$.

Next we recall the construction of the generic splitting tower of Knebusch
for semisimple algebraic groups (see \cite{Kn76}, \cite{Kn77}).

Consider the set
\begin{equation}\label{higher}
\{\ind(G_K)_{\an}\mid K/k\text{ is a field extension}\},
\end{equation}
where $\ind(G_K)_{\an}$ stands for the Tits index of the semisimple
anisotropic kernel of $G_K$.

\begin{dfn}
Set~(\ref{higher}) is called the set of the Higher Tits Indices of $G$.
\end{dfn}

This set can be obtained using the {\it generic splitting tower}, i.e.,
it suffices to consider not all field extensions $K/k$, but just
a finite number of {\it generic} ones. The latter are defined inductively
as follows.

First we set $K^0=k$, $G^1=G_{\an}$ and consider the function fields
$K^1_i=K^0(X_i)$, $i\in\Delta(G^1)\subset\Delta(G)$,
of the projective varieties of the maximal parabolic subgroups of $G^1$
of type $i$.
Note that there are precisely $\rk G_{\an}=|\Delta(G_{\an})|$ such varieties.

Next for each $j\in\Delta(G^1)$ we consider the group
$G^2_j=(G^1_{K^1_j})_{\an}$ over $K^1_j$ and
apply the same procedure, i.e., consider the fields
$K^2_i=K^1_j(X_i)$, $i\in\Delta(G^2_j)\subset\Delta(G)$, where $X_i$ stands
for the projective variety over $K^1_j$ of the maximal parabolic subgroups
of type $i$ of the group $G^2_j$. Proceeding further we obtain a set (a tower)
of fields $K_*^*$. Its main property is that
\begin{equation}\label{f1}
\{\ind(G_K)_{\an}\mid K/k\text{ is a field extension}\}
\end{equation}
\begin{equation*}
=\{\ind(G_{K^j_i})_{\an}\mid K^j_i/k\text{ is an element in the generic
splitting tower of }G\}.
\end{equation*}
The maximal value of the upper index of $K^*_*$'s is called the {\it height}
of $G$.

Conversely, given the set of the higher Tits index of $G$,
the Tits index of $G_{k(X_i)}$ can be restored as the minimal higher Tits
index containing $i$. An overview of this ideas can be found in
\cite{KR94}.

It is sometimes convenient to represent the right hand side of 
identity~(\ref{f1}) as an oriented labelled graph as following. First we define
a graph whose vertices are the anisotropic groups $G^s_j$
appearing in the construction above and which we represent by their Dynkin
diagrams.
There is an edge from $G^s_j$ to
$G^{s'}_{j'}$ with the label $i$ if and only if $s'=s+1$, $i=j'$ and
$G^{s+1}_{i}=((G^s_j)_{K^{s-1}_j(X_i)})_{\an}$.
Next we identify the vertices of this graph which correspond to
the same Dynkin types. Thus, the vertices of this graph represent all
possible Tits indices of $G_K$ without repetitions for all $K/k$.
The height of $G$ is the maximal length of the paths on the graph.

\begin{exs}
1. Let $G$ be an anisotropic group of type $\B_2$.
Then its splitting graph is:
\xymatrix@M=0pt@=1em{
     & \B_2\ar[ldd]_{\text{{\tiny 1}}}\ar[rdd]^{\text{{\tiny 2}}} &\\
     &      &\\
\B_1\ar[rr]_{\text{{\tiny 2}}} &      & 1  
} (enumeration of simple roots follows Bourbaki).

2. Let $G$ be an anisotropic group of type $\B_3$ and $q$ the respective
quadratic form of discriminant $1$.
Then its splitting graph is: 
\xymatrix@M=0pt@=1em{
     \B_3\ar[dd]^{\text{{\tiny 1,2,3}}}\\
 \\
1  
} if $q$ has trivial Clifford invariant
and 
\xymatrix@M=0pt@=1em{
      &\B_3\ar[dddl]_{\text{{\tiny 1}}}\ar[dddr]^{\text{{\tiny 3}}}\ar[dd]_{\text{{\tiny 2}}}
      &  \\
      &                                  &\\
      & \B_1\ar[dr]_{\text{{\tiny 3}}}   &  \\
  \B_2\ar[rr]_{\text{{\tiny 3}}}\ar[ru]_{\text{{\tiny 2}}}  &   & 1 
}
otherwise.

There exist generalizations for quadratic forms of bigger
dimensions due to N.~Karpenko and A.~Vishik.
\end{exs}
The pictures in the examples resemble an automaton. Therefore one can
call the graphs defined above the {\it Tits automata}.

\begin{rem}
Note that the first step, i.e., the groups $G^2_*$ are the most important
ones. Indeed, the next groups $G^{\ge 3}_*$ involved in the construction
are anisotropic kernels of the groups $G^2_*$. Since the rank
of $G^2_*$ is smaller than the rank of $G^1$, we are deduced to the same
situation but for groups of a smaller rank.
\end{rem}

\section{Cycles on projective homogeneous varieties and
Chow motives}\label{sec2}
In this section we briefly describe the main properties of projective
homogeneous varieties and their Chow rings (see \cite{De74}, \cite{Hi82}).

Let $G$ be a split semisimple algebraic group of rank $n$ defined over a field $k$.
We fix a split maximal torus $T$ in $G$ and a Borel subgroup $B$ of $G$
containing $T$ and defined over $k$. 
We denote by $\Phi$ the root system of $G$,
by $\Pi=\{\alpha_1,\ldots,\alpha_n\}$ the set of simple roots of $\Phi$
with respect to $B$, by $W$ the Weyl group, and by $S=\{s_1,\ldots,s_n\}$
the corresponding set of fundamental reflections.

Let $P=P_\Theta$ be the (standard) parabolic subgroup corresponding
to a subset $\Theta\subset\Pi$, i.e., $P=BW_\Theta B$, where
$W_\Theta=\langle s_\theta, \theta\in\Theta\rangle$.
Denote $$W^\Theta=\{w\in W\mid\forall\, s\in\Theta\quad l(ws)=l(w)+1\},$$
where $l$ is the length function.
It is easy to see that $W^\Theta$ consists of all representatives
in the left cosets $W/W_\Theta$ which have minimal length.

As $P_i$ we denote the maximal parabolic subgroup $P_{\Pi\setminus\{\alpha_i\}}$
{\it of type $i$}
and as $w_0$ the longest element of $W$. Enumeration of simple roots follows Bourbaki.

Any projective $G$-homogeneous variety $X$
is isomorphic to $G/P_\Theta$ for some subset $\Theta$ of the simple roots.

Now consider the Chow ring of the variety $X=G/P_\Theta$. 
It is known that $\CH^*(G/P_\Theta)$ is a free abelian group
with a basis given by varieties $[X_w]$ that correspond 
to the elements $w\in W^{\Theta}$. 
The degree (codimension) of the basis element $[X_w]$ equals
$l(w_{\theta})-l(w)$, where $w_\theta$ is the longest element of $W_\Theta$.

Moreover, there exists a natural injective pull-back homomorphism
$$\CH^*(G/P)\to\CH^*(G/B)$$
$$[X_w]\mapsto[X_{ww_\theta}]$$

The following results provide tools to perform computations in the Chow
ring $\CH(G/P_\Theta)$.

In order to multiply two basis elements $h=[X_w]$ and $g=[X_{w'}]$ of $\CH^*(G/P_\Theta)$ such that 
$\deg h+\deg g=\dim G/P_\Theta$ we use the following
formula (Poincar\'e duality):
\begin{equation}\label{poinc}
[X_w]\cdot [X_{w'}]= \delta_{w,w_0w'w_\theta}\cdot [X_1].
\end{equation}
In view of Poincar\'e duality we denote as $[Z_w]$ the cycle dual
to $[X_w]$ with respect to the canonical basis. In other words,
$[Z_w]=[X_{w_0ww_{\theta}}]$.

In order to multiply two basis elements of $\CH^*(G/B)$ one of which
is of codimension $1$ we use the following formula 
(Pieri formula):
\begin{equation}\label{pieri}
[X_{w_0s_\alpha}][X_w]=
\sum_{\beta\in\Phi^+,\, l(ws_\beta)=l(w)-1}
\langle\beta^\vee,\w_\alpha\rangle[X_{ws_\beta}],
\end{equation}
where $\alpha$ is a simple root and the sum runs through
the set of positive roots $\beta\in\Phi^+$, $s_\beta$ denotes
the reflection corresponding to $\beta$
and $\w_\alpha$ is the fundamental weight corresponding
to $\alpha$. Here $[X_{w_0s_\alpha}]$ is the element of codimension $1$.

The {\it Poincar\'e polynomial} of 
a free abelian $\zz$-graded finitely generated group $A^*$ is, by definition, the
polynomial $g(A^*,t)=\sum_{i=-\infty}^{+\infty} a_it^i\in\zz[t,t^{-1}]$ with $a_i=\rk A^i(X)$.
The following formula (the Solomon theorem) allows to compute the Poincar\'e
polynomial of $\CH^*(X)$:
\begin{equation}
g(\CH^*(X),t)=\frac{r(\Pi)}{r(\Theta)},\ 
r(-)=\prod_{i=1}^l\frac{t^{d_i(-)}-1}{t-1},
\end{equation}
where $d_i(\Theta)$ (resp. $d_i(\Pi)$) denote the degrees of the fundamental polynomial invariants of
the root subsystem of $\Phi$ generated by $\Theta$ (resp. $\Pi$) and $l$ its rank
(see \cite{Ca72}). 
The dimension of $X$ equals $\deg g(\CH^*(X),t)$.
There exists a Maple package \cite{St} of J.~Stembridge
that provides tools to compute the Poincar\'e polynomials of projective
$G$-homogeneous varieties.

Let $\mathrm{P}=\mathrm{P}(\Phi)$ denote the weight space. 
We denote as $\w_1,\ldots\w_n$ the basis of $\mathrm{P}$
consisting of the fundamental weights. 
The symmetric algebra $S^*(\mathrm{P})$ is isomorphic to
$\mathbb{Z}[\w_1,\ldots\w_n]$. 
The Weyl group $W$ acts on $\mathrm{P}$, hence, on $S^*(\mathrm{P})$. 
Namely, for a simple root $\alpha_i$
$$s_i(\w_j)=
\begin{cases}
\w_i-\alpha_i, & i=j; \\
\w_j, & \text{otherwise}.
\end{cases}$$
We define a linear map $c\colon S^*(\mathrm{P})^{W_{\Theta}}\to\CH^*(G/P_\Theta)$
as follows.
For a homogeneous $W_{\Theta}$-invariant $u\in\mathbb{Z}[\w_1,\ldots,\w_n]$ 
$$
c(u)=\sum_{w\in W^\Theta,\, l(w)=\deg(u)}\Delta_w(u)[X_{w_0ww_\theta}],
$$ 
where for $w=s_{i_1}\ldots s_{i_k}$ we denote by $\Delta_w$
the composition of derivations
$\Delta_{s_{i_1}}\circ\ldots\circ\Delta_{s_{i_k}}$ and
the derivation $\Delta_{s_i}\colon S^*(\mathrm{P})\to S^{*-1}(\mathrm{P})$ is 
defined by $\Delta_{s_i}(u)=\dfrac{u-s_i(u)}{\alpha_i}$.

Let $U=\Sigma_u(P_\Theta)$ denote the set of the (positive) roots lying in the
unipotent radical of the parabolic subgroup $P_\Theta$.
Then the elementary symmetric polynomials
$\sum_{u\in U}\sigma_i(u)$ are $W_{P_\Theta}$-invariant and, in fact,
coincide with the Chern classes of the tangent bundle $T_X$:
\begin{equation}
c(T_X)=c(\prod_{\gamma\in U}(1+\gamma)).
\end{equation}
The Maple package \cite{map} provides efficient tools to compute
the Chern classes of the tangent bundles.

To multiply two cycles $[X_{w_1}]$ and $[X_{w_2}]$ in $\CH^*(X)$ we
proceed as follows. First, we find preimages of $[X_{w_1}]$ and $[X_{w_2}]$ in
$S^*(\mathrm{P})\otimes\qq=\qq[\w_1,\ldots,\w_n]$ (the preimages always exist;
see below), then 
we either expand the product in the polynomial ring
$\qq[\w_1,\ldots,\w_n]$ and apply the function $c$, or apply $c$ directly
using the Pieri formula~(\ref{pieri}), the Leibniz rule
\cite[Ch.~IV, Lemma~1.1(e)]{Hi82} or/and Poincar\'e duality~(\ref{poinc}).

To find a preimage of some $[X_w]$ we do the following. It is well known
that the map $c\otimes\qq\colon \qq[\w_1,\ldots\w_n]^{W_{\Theta}}\to
\CH^*(G/P_\Theta)\otimes\qq$ defined above is a ring epimorphism, and the ring
$\qq[\w_1,\ldots\w_n]^{W_{\Theta}}$ is generated by $\w_i$, $i\not\in\Theta$,
and by the $W_{\Theta}$-invariant fundamental polynomials for the 
semisimple part of the Levi part of $P_\Theta$,
i.e., for the split group of type $\langle\Theta\rangle\subset\Phi$.
The latter polynomials (as well as their degrees called
degrees of fundamental polynomial invariants) are known. Explicit formulas
for them are provided in \cite{Meh88}. Now, since we know a generating set of
$\qq[\w_1,\ldots\w_n]^{W_{\Theta}}$, we can compute its image in 
$\CH^*(G/P_\Theta)\otimes\qq$ and, thus, find a set of generators
of $\CH^*(G/P_\Theta)\otimes\qq$ together with their preimages
in $\qq[\w_1,\ldots\w_n]^{W_{\Theta}}$. Therefore we can compute a preimage
of any element in $\CH^*(G/P_\Theta)\otimes\qq$. Observe that we don't loose
any information extending scalars to $\qq$, since the group
$\CH^*(G/P_\Theta)$ is free abelian.

The effective procedures to multiply cycles 
in the Chow rings of projective homogeneous varieties
are implemented in the Maple package \cite{map}\footnote{Created
in collaboration with S.~Nikolenko and K.~Zainoulline.}.

Next we briefly describe Steenrod operations and motivic
decompositions of projective $G$-homogeneous varieties with isotropic group
$G$ following \cite{CGM05} and \cite{Br05}.
We refer the reader to the book \cite{EKM} of R.~Elman, N.~Karpenko,
and A.~Merkurjev or to the original paper \cite{Ma68} of Yu.~Manin
for the definition and properties of the Chow motives.

The main result of papers \cite{CGM05} and \cite{Br05} asserts that the Chow
motive a projective $G$-homogenous variety $X$ with isotropic group $G$
decomposes into a direct sum of (twisted) motives of anisotropic projective
$G_{\an}$-homogeneous varieties $Y_i$. Moreover, one has an explicit
algorithm to compute these motivic decompositions.

In the present section we give a combinatorial interpretation of these
decompositions in terms of the Hasse diagrams of the weak Bruhat order.
Namely, consider an oriented labelled graph, called {\it Hasse diagram} of $X$,
whose vertices are elements of $W^{\Theta}$, where $\Theta$ denotes
the type of the variety $X$, i.e., a graph whose vertices correspond
to the free additive generators of $\CH^*(\BX)$, where $\BX=X\times_{\Spec k}
\Spec k_s$ and $k_s$ stands for a separable closure of $k$.
There is an edge from a vertex $w$ to a vertex $w'$ labelled with $i$
if and only if $l(w)<l(w')$ and $w'=s_iw$.

Consider now the Chow motive of $X$ and erase from the Hasse diagram of
$X$ all edges with labels not in $\Delta_0$ (see Section~\ref{tits}). The Hasse
diagram splits then into several non-connected components which
correspond to the varieties $Y_i$. To illustrate this construction
we give the following example.
\begin{ex}\label{exe7}
Let $G$ be an isotropic group of type $\E_7$ such that $G_{\an}$
has type $\D_4$. This means that the vertices $1$, $6$, and $7$ on the
Tits diagram of $G$ are circled. Consider the projective $G$-homogeneous
variety $X$ of parabolic subgroups of type $7$. Its Hasse diagram is provided
in \cite[Figure~21]{PlSeVa}. Cutting the Hasse diagram along the edges
with labels $1$, $6$, and $7$ we see that the diagram splits
into $14$ components: $8$ alone standing vertices which correspond to
the elements of $W^\Theta$ of length $0$, $1$, $9$, $10$,
$17$, $18$, $26$, and $27$, and therefore to
the (twisted) Lefschetz motives $\zz$, $\zz(1)$, $\zz(9)$,
$\zz(10)$, $\zz(17)$, $\zz(18)$, $\zz(26)$, $\zz(27)$, and $6$
diagrams that correspond to different varieties of type $\D_4$.
It is well known and easy to see that $G_{\an}$ corresponds to
a(n anisotropic) $3$-fold Pfister form $\varphi$ and therefore by the celebrated result
of M.~Rost \cite{Ro98} the Chow motives of a projective $G_{\an}$-homogeneous
variety splits into a direct sum of (twisted) Rost motives $R$ which depend
only on $\varphi$.
The Rost motive $R$ is indecomposable and over $k_s$
(where $\varphi$ splits) $R_s\simeq\zz\oplus\zz(3)$.
Thus,
$$\Mot(X)\simeq
(\oplus_{i=0,1,9,10,17,18,26,27}\zz(i))\oplus
(\oplus_{i=2}^{22} R(i))\oplus R(11)\oplus R(12)\oplus R(13).$$
\end{ex}
One should note that in the category of the Chow motives with
finite coefficients of projective
homogeneous varieties the Krull-Schmidt theorem holds
(see \cite[Theorem~9.6]{CM06}). Therefore the motivic decompositions
are unique.

\medskip

Now we briefly recall
the basic properties of Steenrod operations constructed by V.~Voevodsky.
We follow P.~Brosnan \cite{Br03}

Let $X$ be a smooth projective variety over a field $k$ with
$\mathrm{char}\,k\ne 2$ and $p=2$.
For every $i\ge 0$ there exist certain homomorphisms $S^i=Sq^{2i}\colon
\Ch^*(X)\to\Ch^{*+i}(X)$ called Steenrod operations. The total
Steenrod operation is the sum $S=S_X=S^0+S^1+\ldots\colon\Ch^*(X)\to\Ch^*(X)$.
This map is a ring homomorphism. The restriction $S^i\vert_{\Ch^n(X)}$ is $0$
for $i>n$ and is the map $\alpha\mapsto\alpha^2$ for $n=i$. The map $S^0$
is the identity. Moreover, the total Steenrod operation commutes
with pull-backs and, in particular, preserves rationality of cycles.

To compute the Steenrod operations
on a projective $G$-homogeneous variety with a split group $G$ we use an
algorithm described in details in \cite{Du07}. This algorithm
is implemented in the Maple package \cite{map}.

\section{$J$-invariant}\label{secj}
In this section we recall the definition and the main properties of 
a motivic invariant of a semisimple algebraic group introduced in \cite{PSZ07}
and called the $J$-invariant. It was shown in \cite{PSZ07} that
this invariant determines the motivic behaviour of generically split
projective homogeneous varieties (see the definition below).

Let $G_0$ be a split semisimple algebraic group over $k$ with a 
split maximal torus $T$ and a Borel subgroup $B$ containing $T$.
Let $G={}_\gamma G_0$ be the twisted form of $G_0$ 
given by a $1$-cocycle $\gamma\in H^1(k,G_0)$.

Let $X$ be a projective $G$-homogeneous variety and $p$ a prime integer.
To simplify the notation we denote $\Ch^*(X)=\CH^*(X)\otimes\zz/p$
and $\overline X=X\times_{\Spec k}\Spec k_s$,
where $k_s$ stands for a separable closure of $k$.
We say that a cycle $J\in\CH^*(\overline X)$
(resp. $J\in\Ch^*(\overline X)$) is {\it rational}
if it lies in the image of the natural restriction map
$\res\colon\CH^*(X)\to\CH^*(\overline X)$
(resp. $\res\colon\Ch^*(X)\to\Ch^*(\overline X)$).
We denote as $\CHO^*(X)$ (resp. as $\ChO^*(X)$) the image of this map.
 
From now on and till the end of this section we consider
the variety $X={}_\gamma(G_0/B)$ of complete flags. 
Let $\widehat T$ denote the group of characters of $T$ and
$S(\widehat T)\subset S^*(\mathrm{P})$ be the symmetric algebra
(see Section~\ref{sec2}). 
By $R^*$ we denote the image of the characteristic map 
$c\colon S(\widehat T) \to \Ch^*(\BX)$ defined above.
According to \cite[Theorem~6.4]{KM05} $R^*\subseteq \ChO^*(X)$.

Let $\Ch^*(\BG)$ denote the Chow ring with $\zz/p$-coefficients
of the group $(G_0)_{k_s}$.
An explicit presentation of $\Ch^*(\BG)$ in terms of generators
and relations is known for all groups and all primes $p$.
Namely, by \cite[Theorem~3]{Kc85}
\begin{equation}\label{formkac}
\Ch^*(\BG)=(\zz/p)[x_1,\ldots,x_r]/(x_1^{p^{k_1}},\ldots,x_r^{p^{k_r}})
\end{equation}
for certain numbers $k_i$, $i=1,\ldots,r$, and $\deg x_i=d_i$
for certain numbers $1\le d_1\le\ldots\le d_r$ coprime to $p$.
A complete list of
numbers $\{d_ip^{k_i}\}_{i=1,\ldots,r}$, called {\it $p$-exceptional degrees}
of $G_0$, is provided in \cite[Table II]{Kc85}.  
Taking the $p$-primary and $p$-coprimary parts of each $p$-exceptional degree 
one immediately restores the respective $k_i$'s and $d_i$'s.

Now we introduce an order on the set of additive generators 
of $\Ch^*(\BG)$, i.e., on the monomials $x_1^{m_1}\ldots x_r^{m_r}$. 
To simplify the notation, we denote the monomial
$x_1^{m_1}\ldots x_r^{m_r}$ by $x^M$, where $M$ is an $r$-tuple
of integers $(m_1,\ldots,m_r)$. The codimension (in the Chow ring) of
$x^M$ is denoted by $|M|$. Observe that $|M|=\sum_{i=1}^rd_im_i$.

Given two $r$-tuples $M=(m_1,\ldots,m_r)$ and $N=(n_1,\ldots,n_r)$ we say
$x^M\le x^N$ (or equivalently $M\le N$) if either $|M|<|N|$, or $|M|=|N|$ and 
$m_i\le n_i$ for the greatest $i$ such that $m_i\ne n_i$.
This gives a well-ordering on the set of all monomials ($r$-tuples)
known also as {\it DegLex order}. 

Consider the pull-back induced by the quotient map
$$
\pi\colon \Ch^*(\BX)\to \Ch^*(\BG)
$$
According to \cite[Rem.~$2^\circ$]{Gr58} $\pi$ is surjective with
the kernel generated by
the subgroup of the non-constant elements of $R^*$.

Now we are ready to define the $J$-invariant of a group $G$.
\begin{dfn}\label{def71}
Let $X={}_\gamma(G_0/B)$ be the twisted
form of the variety of complete flags by means of a $1$-cocycle 
$\gamma\in H^1(k,G_0)$.
Denote as $\ChO^*(G)$ the image of the composite map
$$
\Ch^*(X)\xra{\res} \Ch^*(\BX) \xra{\pi} \Ch^*(\BG).
$$
Since both maps are ring homomorphisms, $\ChO^*(G)$ is a subring of $\Ch^*(\BG)$.

For each $1\le i\le r$ set $j_i$ to be the smallest non-negative
integer such that the subring $\ChO^*(G)$ contains an element $a$ 
with the greatest monomial $x_i^{p^{j_i}}$ 
with respect to the DegLex order on $\Ch^*(\BG)$, i.e.,
of the form 
$$
a=x_i^{p^{j_i}}+\sum_{x^M\lneq x_i^{p^{j_i}}} c_M x^M, \quad c_M\in\zz/p.
$$
The $r$-tuple of integers $(j_1,\ldots,j_r)$ is called
the {\it $J$-invariant of $G$ modulo $p$} and is denoted by $J_p(G)$.
Note that $j_i\le k_i$ for all $i$.
\end{dfn}

In case $p$ is not a torsion prime of $G$  we have $\Ch^*(\BG)=\zz/p$.
Therefore the $J$-invariant is interesting only for torsion primes
(see \cite[Definition~3]{Gr58} for a definition of torsion primes).
A table of possible values of the $J$-invariants is given
in \cite[Section~6]{PSZ07}.

To illustrate Definition~\ref{def71} of the $J$-invariant we give the following example.
For a prime integer $p$ we denote as $v_p$ the $p$-adic valuation.
\begin{ex}
Let $p$ be a prime integer and
$A$ and $B$ be central simple $k$-algebras that generate the same
subgroup in the Brauer group $\mathrm{Br}(k)$.
Set $G=\PGL_1(A)\times\PGL_1(B)$.

Then $J_p(G)=(v_p(\ind A),0)$.
Indeed, the Chow ring $$\Ch^*(\BG)=(\zz/p)[x_1,x_2]/(x_1^{p^{k_1}},x_2^{p^{k_2}})$$
with $k_1=v_p(\deg A)$, $k_2=v_p(\deg B)$. Therefore $r$ in the definition of the
$J$-invariant equals $2$. Denote $J_p(G)=(j_1,j_2)$ and
consider the map
$$\res\colon\Pic(X_A\times X_B)\to\Pic(\overline{X}_A\times\overline{X}_B),$$
where $X_A$ (resp. $X_B$) denote the $\PGL_1(A)$-
(resp. $\PGL_1(B)$-) variety of complete flags
and $\Pic$ stands for the Picard group modulo $p$.
Denote by $h_A$  (resp. $h_B$) the image of $\w_1\in S(\mathrm{P})$ in
$\Pic(\overline{X}_A)$ (resp. $\Pic(\overline{X}_B)$)
by means of the map $c$ defined in Section~\ref{sec2}. 

Since $A$ and $B$ generate the same subgroup in the Brauer group, the cycle
$1\times h_B+\alpha h_A\times 1\in\Pic(\overline{X}_A\times\overline{X}_B)$
is rational for some $\alpha\in(\zz/p)^\times$ (see \cite{MT95} for the
description of the Picard groups of projective homogeneous varieties).
The image of this cycle in $\Ch^*(\BG)$ by means of $\pi$
equals $x_2+\alpha x_1$ (at least
we can choose the generators $x_1$ and $x_2$ in such a way).
Therefore, since $x_1<x_2$
in the DegLex order, $j_2=0$. The proof that $j_1=v_p(\ind A)$ is the same
as in \cite[Section~7, case $\A_n$]{PSZ07} and we omit it.
\end{ex}

Next we describe some useful properties of the $J$-invariant.
\begin{prop}\label{lemj}
Let $G$ be a semisimple algebraic group of inner type over $k$,
$p$ a prime integer and $J_p(G)=(j_1,\ldots,j_r)$.
Then
\begin{enumerate}
\item Let $K/k$ be a field extension. 
Denote $J_p(G_K)=(j_1',\ldots,j_r')$.  Then $j_i'\le j_i$, $i=1,\ldots,r$.
\item 
Fix an $i=1,\ldots,r$.
Assume that in the presentation~(\ref{formkac}) 
for the semisimple anisotropic kernel $G_{\an}$ of $G$
none of $x_j$ has degree $d_i$. Then $j_i=0$.
\item Assume $d_i=1$ for some $i=1,\ldots,r$.

Then $j_i\le\underset{A}{\max}\, v_p(\ind A)$ where $A$ runs through all Tits algebras of $G$.
Conversely, if $j_i>0$, then there exists
a Tits algebra $A$ of $G$ with $v_p(\ind A)>0$.
\item Assume that the group $G$ does not have simple components
of type $\E_8$ and
for all primes $p$ the $J$-invariant $J_p(G)$ is trivial. Then $G$ is split.
\end{enumerate}
\end{prop}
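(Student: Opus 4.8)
The plan is to establish the four assertions in turn, (1)--(3) being short while (4) carries the real content.

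Assertion (1) is pure functoriality: the restriction map factors as $\Ch^*(X)\to\Ch^*(X_K)\to\Ch^*(\BX)$, and since $G_0$ stays split over $K$ the ring $\Ch^*(\BG)$ together with its monomial basis and the DegLex order is unchanged by the extension; hence $\ChO^*(G)\subseteq\ChO^*(G_K)$ as subrings of $\Ch^*(\BG)$. Any $a\in\ChO^*(G)$ with greatest monomial $x_i^{p^{j_i}}$ thus also lies in $\ChO^*(G_K)$, so $j_i'\le j_i$ by minimality of $j_i'$. For assertion (2) I would use the decomposition of $\Mot(X)$ (with $\zz/p$-coefficients) into twisted motives of anisotropic $G_{\an}$-homogeneous varieties $Y_s$ from \cite{CGM05} and \cite{Br05}, together with the ensuing description of $\ChO^*(G)$ in terms of the $\ChO^*(Y_s)$ as in \cite{PSZ07}. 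The claim then reduces to degree bookkeeping: a generator $x_i$ of $\Ch^{d_i}(\BG)$ whose degree does not occur among the generators in the Kac presentation~(\ref{formkac}) of $\bar G_{\an}$ is, modulo the rational subring $R^*\subseteq\ChO^*(X)$ (see \cite{KM05}), represented by a cycle on $\BX$ supported on the split Levi part attached to the circled nodes, hence rational; so $x_i\in\ChO^*(G)$ and $j_i=0$.

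Assertion (3) concerns the degree-one part, where $d_i=1$ means the relevant graded piece is $\Ch^1(\BX)=\Pic(\BX)\otimes\zz/p$. By \cite{MT95} the image of $\res$ on this group is exactly the subgroup cut out by the indices of the Tits algebras of $G$; passing to the quotient by $R^1$, which $\pi$ annihilates, yields the converse statement that $j_i>0$ forces some Tits algebra to have $p$-divisible index. For the bound $j_i\le\max_A v_p(\ind A)$ I would repeat the type-$\A_n$ computation of \cite[Section~7]{PSZ07}: a Tits algebra $A$ attaining the maximum produces, via its Severi--Brauer variety, a rational cycle whose greatest monomial is $x_i^{p^{v_p(\ind A)}}$, whence $j_i\le v_p(\ind A)$. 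One uses along the way that the total Steenrod power carries a rational cycle with greatest monomial $x_i^{p^j}$ to one with greatest monomial $x_i^{p^{j+1}}$, so it suffices to exhibit the single cycle at level $v_p(\ind A)$.

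For assertion (4) I would argue by contraposition. Observe first that $J_p(G)$ trivial for all $i$ means that $\ChO^*(G)$, being a subring of $\Ch^*(\BG)$ that contains for each $i$ an element with greatest monomial $x_i$, is all of $\Ch^*(\BG)$; so the hypothesis is that $\pi\circ\res\colon\Ch^*(X)\to\Ch^*(\BG)$ is surjective for every prime $p$, with $X=G/B$. Using the motivic decomposition of $X$ as in (2), this property descends to the semisimple anisotropic kernel $G_{\an}$: a non-Tate motivic summand of some $Y_s$ would already produce a non-Tate summand of $X$ (by the Krull--Schmidt uniqueness of \cite{CM06}), hence some $j_i>0$. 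So one may assume $G=G_{\an}$ is semisimple anisotropic of inner type with no $\E_8$ factor, and it remains to see that then $J_p(G)\neq 0$ for some $p$. This I would check along the simple factors: when a Tits algebra of $G$ has index divisible by a prime $p$ for which $\Ch^*(\BG)$ has a degree-one generator over the corresponding node, (3) already gives $j_i>0$; for the remaining factors, where non-splitness is cohomological in nature, one compares Tits' classification of the anisotropic kernels of inner type against the list of possible $J$-invariants in \cite[Section~6]{PSZ07} (drawing on \cite{GS} in the $\E_7$ case) and verifies in each type that a nontrivial anisotropic kernel forces some $j_i>0$. The verification goes through for all the Dynkin types in question other than $\E_8$, which is why the latter is excluded, the anisotropic groups of type $\E_8$ not being understood well enough for the final step.

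I expect assertion (4) to be the main obstacle, for two reasons. First, the reduction to the anisotropic kernel is genuinely delicate: by (1) the $J$-invariant is monotone under field extensions in only one direction, so one cannot simply base-change down to $G_{\an}$ and must route the argument through the motivic decomposition and its uniqueness. Second, the type-by-type comparison of anisotropic kernels with the $J$-invariant table --- interlocked with (3), which is what absorbs the factors carrying nontrivial Tits algebras --- is precisely what pins the statement down to Dynkin types other than $\E_8$.
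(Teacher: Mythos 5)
Your parts (1) and (3) are essentially sound: (1) is the same functoriality argument as in the paper, and for (3) the paper likewise gets the converse from the rationality of $\Pic(\BX)$ when all Tits algebras are trivial \cite{MT95}, and the bound by exhibiting a rational cycle mapping to $x_i^{p^s}$ --- though note that the relevant algebra is the Tits algebra attached to the vertex $t$ with $\pi(h_t)=x_i$, not an algebra ``attaining the maximum'', and no Steenrod operations are needed: the paper simply raises the rational cycle $h_t\times 1-1\times h_A$ on $X\times\SB(A)$ to the $p^s$-th power and pushes it back to $X$ via a section of the projective bundle $X\times\SB(A)\to X$. For (2) the paper argues differently and more precisely: it compares the Kac presentations~(\ref{formkac}) for $G$ and $G_{\an}$ via \cite[Table~II]{Kc85} and invokes \cite[Corollary~5.4]{PSZ07}, which says that the polynomials $\prod_i(1-x^{d_ip^{j_i}})/(1-x^{d_i})$ built from $J_p(G)$ and from $J_p(G_{\an})$ coincide; if no generator of degree $d_i$ occurs for $G_{\an}$, this identity forces $j_i=0$. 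Your version (``$x_i$ is represented by a cycle supported on the split Levi part, hence rational'') asserts exactly the nontrivial comparison between $\ChO^*(G)$ and the anisotropic kernel that this corollary encodes, without proving it, so as written it is a gap, albeit one in the right spirit.

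The genuine problem is (4). The paper's proof is a two-line citation: by \cite[Corollary~6.10]{PSZ07}, triviality of $J_p(G)$ implies that $G$ splits over a field extension of degree prime to $p$; doing this for every prime and applying Gille's theorem \cite[Theorem~C]{Gi97} --- a semisimple group without $\E_8$ factors that is split by extensions of coprime degrees is (quasi-)split --- gives the claim, and the $\E_8$ exclusion is precisely the hypothesis of Gille's theorem. Your proposed route, reducing to $G_{\an}$ and then ``comparing Tits' classification against the list of possible $J$-invariants in \cite[Section~6]{PSZ07}'', does not close the argument: that table only lists the a priori possible values $j_i\le k_i$ for each type and prime; it does not say that an anisotropic group must realize a nonzero value. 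For strongly inner anisotropic forms (e.g.\ type $\D_n$, $^1\E_6$ or $\E_7$ with trivial Tits algebras), nothing in (1)--(3) or in the table rules out $J_p(G)=0$ for all $p$; the only known mechanism is exactly the prime-to-$p$ splitting result plus Gille's coprime-degree theorem, which your sketch never invokes. The appeal to \cite{GS} in the $\E_7$ case is also misplaced --- that paper concerns zero-cycles of degree one on $\E_7$-varieties and plays no role here. So you would need to either reproduce the content of \cite[Corollary~6.10]{PSZ07} and \cite[Theorem~C]{Gi97}, or cite them, for part (4) to stand.
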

\begin{proof}
1. This is an obvious consequences of the definition of the $J$-invariant.

2. 
Let $\Ch^*(G)=(\zz/p)[x_1,\ldots,x_r]/(x_1^{p^{k_1}},\ldots,x_r^{p^{k_r}})$
with $\deg x_i=d_i$,
$\Ch^*(G_{\an})=(\zz/p)[x'_1,\ldots,x'_{r'}]/(x_1^{p^{k'_1}},\ldots,
x_{r'}^{p^{k'_r}})$ with $\deg x'_i=d_i'$, and
$J_p(G_{\an})=(j'_1,\ldots,j'_{r'})$.
It follows from the \cite[Table~II]{Kc85} that
$r'\le r$ and $\{d'_i, i=1,\ldots,r'\}\subset\{d_i,i=1,\ldots,r\}$.
On the other hand, by \cite[Corollary~5.4]{PSZ07}, the polynomials
$\prod_{i=1}^r\dfrac{1-x^{d_ip^{j_i}}}{1-x^{d_i}}$ 
and $\prod_{i=1}^{r'}\dfrac{1-x^{d'_ip^{j'_i}}}{1-x^{d'_i}}$ are equal.
This implies the claim.

3.
Assume that $j_i>0$ and all Tits algebras of $G$ are trivial. Then
by \cite{MT95} the group $\Pic(\BX)$, where $X$ denotes the $G$-variety
of complete flags, is rational. Therefore by the very definition
of the $J$-invariant $j_i=0$. A contradiction.

Let $A$ be a Tits algebra of $G$ corresponding to a vertex $t$
of the Dynkin diagram such that $\pi(h_t)=x_i\in\Pic(\BG)$,
where $h_t\in\Pic(\BX)$
is the image of $\w_t\in S(\mathrm{P})$ by means of the map $c$ constructed above.
We show now that $j_i\le v_p(\ind A)=:s$, where $A$ is the Tits algebra
corresponding to the vertex $t$.

Consider the projective homogeneous variety $X\times\SB(A)$, where
$\SB(A)$ denotes the Severi-Brauer variety of right ideals
of $A$ of reduced dimension $1$.
Denote by $h_A\in\Pic(\overline{\SB(A)})=\Pic(\mathbb{P}^{\deg A-1})$ the
canonical generator as in Section~\ref{sec2}.

By the results of A.~Merkurjev and J.-P.~Tignol \cite{MT95} the cycle
$\alpha=h_t\times 1-1\times h_A\in\Pic(\BX\times\overline{\SB(A)})$ is rational.
Since the cycles $\alpha^{p^s}=h_t^{p^s}\times 1-
1\times h_A^{p^s}\in\Ch^*(\BX\times\overline{\SB(A)})$ and $h_A^{p^s}\in
\Ch^*(\overline{\SB(A)})$ are rational, the cycle $h_t^{p^s}\times 1\in
\Ch^*(\BX\times\overline{\SB(A)})$ is rational as well.

The projection morphism $\pr\colon X\times\SB(A)\to X$ is a projective bundle
by \cite[Corollary~3.4]{PSZ07}. In particular,
$\CH^*(X\times\SB(A))=\bigoplus_{j=0}^{\deg A-1}\CH^{*-j}(X)$.
Therefore the pull-back
$\pr^*$ has a section $\delta$.
By the construction of this section it is compatible with a base change.
Passing to the splitting field $k_s$
we obtain that
the cycle $\bar\delta (h_t^{p^s}\times 1)=h_t^{p^s}\in\Ch^*(\BX)$
is rational and the image $\pi(h_t^{p^s})=x_i^{p^s}$. By the definition
of the $J$-invariant, $j_i\le s$.

4. The statement follows from \cite[Corollary~6.10]{PSZ07} and
\cite[Theorem~C]{Gi97}.
\end{proof}

\begin{rem}
The fact that $j_i$ (with $d_i=1$)  provides an upper bound for $v_p(\ind A)$,
where $A$ runs through the Tits algebras of $G$, is not true.
A counter-example is e.g. a group of type $\E_7$ with a Tits
algebra of index more than $2$.
\end{rem}

\section{Generically split varieties}
In this section we begin to study the higher Tits indices of semisimple algebraic groups
over $k$. First, we would like to understand under what conditions
our group $G$ splits over the field of rational functions of a projective
$G$-homogeneous variety $X$.

\begin{dfn}
Let $G$ be a semisimple algebraic group over $k$ and $X$ a projective
$G$-homogeneous variety. We say that $X$ is {\it generically split}, if
the group $G$ splits (i.e., contains a split maximal torus) over $k(X)$.
\end{dfn}

\begin{rem}
If $X$ is generically split, then the Chow motive of $X$ splits
over $k(X)$ as a direct sum of Lefschetz motives.
This explains the terminology ``generically split''. One can also
call such varieties {\it generically cellular}, since over $k(X)$ they
are cellular via the Bruhat decomposition.
\end{rem}

\begin{thm}\label{thm1}
Let $G_0$ be a split semisimple algebraic group over $k$,
$G={}_\gamma G_0$ the twisted form of $G_0$ 
given by a $1$-cocycle $\gamma\in H^1(k,G_0)$,
and $X$ a projective $G$-homogeneous variety.
If $X$ is generically split, then for all primes $p$ the 
following identity on the Poincar\'e polynomials holds:
\begin{equation}\label{form}
\frac{g(\Ch^*(\BX),t)}{g(\ChO^*(X),t)}=\prod_{i=1}^r\dfrac{t^{d_ip^{j_i}}-1}{t^{d_i}-1},
\end{equation}
where $J_p(G)=(j_1,\ldots,j_r)$ and $d_i$'s are the $p$-coprimary
parts of the $p$-exceptional degrees of $G_0$. 
\end{thm}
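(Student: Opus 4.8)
The plan is to relate all three Poincaré polynomials to the structure of $\Ch^*(\BG)$ via the quotient map $\pi\colon\Ch^*(\BX)\to\Ch^*(\BG)$ and then invoke the $J$-invariant machinery of \cite{PSZ07}. First I would pass to the variety of complete flags. Write $Y={}_\gamma(G_0/B)$ and recall the commutative setup: $X$ is $G$-homogeneous, and by choosing a Borel we get a projection $Y\to X$ which over $k_s$ is a fibration with cellular fibre $\overline{P}/\overline{B}$ (a split flag variety of the Levi). This gives a clean factorization $g(\Ch^*(\BX),t)\cdot g(\Ch^*(\overline{P}/\overline{B}),t)=g(\Ch^*(\BY),t)$, and a corresponding compatibility $\ChO^*(Y)\cong\ChO^*(X)\otimes\Ch^*(\overline{P}/\overline{B})$ on the level of rational cycles, using that $Y\to X$ is a cellular fibration so the rational-cycle subring of the total space is the tensor product. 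Hence identity~(\ref{form}) for $X$ is equivalent to the same identity for $Y$, and I may assume from now on that $X=Y$ is the variety of complete flags.

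Next I would unwind the definitions. By \cite[Rem.~$2^\circ$]{Gr58} the map $\pi\colon\Ch^*(\BX)\to\Ch^*(\BG)$ is surjective with kernel the ideal generated by the non-constant part of $R^*$, and by \cite[Theorem~6.4]{KM05} we have $R^*\subseteq\ChO^*(X)$. Therefore $\pi$ restricts to a surjection $\ChO^*(X)\twoheadrightarrow\ChO^*(G)$ whose kernel is again generated by the non-constant elements of $R^*$; consequently
\begin{equation*}
\frac{g(\Ch^*(\BX),t)}{g(\ChO^*(X),t)}=\frac{g(\Ch^*(\BG),t)}{g(\ChO^*(G),t)}.
\end{equation*}
So the theorem is reduced to computing the right-hand ratio purely inside $\Ch^*(\BG)=(\zz/p)[x_1,\ldots,x_r]/(x_1^{p^{k_1}},\ldots,x_r^{p^{k_r}})$. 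Here is where generic splitness enters: $X$ is generically split, i.e.\ $G_{k(X)}$ is split, so $\ChO^*(G)$ contains enough rational cycles to force the $J$-invariant structure to be "as large as possible" compatibly with $\Ch^*(\BX)$ becoming entirely rational over $k(X)$. Concretely, the combinatorial input I want is \cite[Corollary~5.4]{PSZ07} (already used in the proof of Proposition~\ref{lemj}): the subring $\ChO^*(G)$, which by Definition~\ref{def71} is generated up to the DegLex filtration by elements with leading monomials $x_i^{p^{j_i}}$, has Poincaré polynomial
\begin{equation*}
g(\ChO^*(G),t)=\prod_{i=1}^r\frac{t^{d_ip^{k_i}}-1}{t^{d_ip^{j_i}}-1},
\end{equation*}
because $\ChO^*(G)\cong(\zz/p)[x_1^{p^{j_1}},\ldots,x_r^{p^{j_r}}]/(x_i^{p^{k_i}})$ as a graded group once $X$ is generically split. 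Since $g(\Ch^*(\BG),t)=\prod_{i=1}^r\frac{t^{d_ip^{k_i}}-1}{t^{d_i}-1}$ from presentation~(\ref{formkac}), dividing yields exactly $\prod_{i=1}^r\frac{t^{d_ip^{j_i}}-1}{t^{d_i}-1}$, which is~(\ref{form}).

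The main obstacle is the middle claim: that generic splitness actually pins down the graded group $\ChO^*(G)$ to be $(\zz/p)[x_1^{p^{j_1}},\ldots]/(x_i^{p^{k_i}})$, rather than merely a subring with the prescribed leading monomials (a priori such a subring could be strictly smaller than the obvious candidate). This is the heart of \cite{PSZ07} — one needs that $\Ch^*(\BX)$ over $k(X)$ becomes rational, hence $\ChO^*(G_{k(X)})=\Ch^*(\BG)$, and then a specialization/restriction argument together with the multiplicativity of $\ChO^*(G)$ as a subring shows that products of the leading generators stay rational and exhaust the claimed subring. I would cite \cite[Corollary~5.4]{PSZ07} (or the discussion around it) for this rather than reprove it, since it is precisely the structural result that the $J$-invariant was designed to capture. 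The remaining bookkeeping — checking that the $d_i$ appearing are the $p$-coprimary parts of the $p$-exceptional degrees, and that $j_i\le k_i$ so the fractions are genuine polynomials — is routine from \cite[Table~II]{Kc85} and Definition~\ref{def71}.
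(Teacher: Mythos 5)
Your endgame arithmetic (dividing the Poincar\'e polynomial of $\Ch^*(\BG)$ by that of a subring with generators in degrees $d_ip^{j_i}$) is consistent with the statement, but both reductions you use to get there are genuine gaps, and the first is false as stated. You claim $\ChO^*(Y)\cong\ChO^*(X)\otimes\Ch^*(\overline{P}/\overline{B})$ ``because $Y\to X$ is a cellular fibration''. Over $k$ this map is only a twisted flag bundle (cellular after base change), and rational cycles do not split along the fibre: the fibre-direction Schubert classes need not be rational. For example, take $p=2$, $G=\PGL_1(A)$ with $A$ a division algebra of degree $4$ and exponent $4$, $X=\SB(A)$ (which is generically split) and $Y$ the variety of complete flags. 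By \cite{MT95} the rational part of $\Pic(\overline{Y})$ is $\{a_1\w_1+a_2\w_2+a_3\w_3 : a_1+2a_2+3a_3\equiv 0\ (4)\}$, whose reduction mod $2$ is spanned by $\w_2$ and $\w_1+\w_3$; the fibre classes $\w_2,\w_3$ themselves are not rational, so the asserted tensor decomposition of the rational subring fails. What survives is at best an equality of Poincar\'e polynomials, and that equality is essentially equivalent to identity~(\ref{form}) for $X$ and for $Y$ simultaneously, so assuming it to reduce to the flag variety is circular.

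The second reduction, ``consequently $g(\Ch^*(\BX),t)/g(\ChO^*(X),t)=g(\Ch^*(\BG),t)/g(\ChO^*(G),t)$'', does not follow from surjectivity of $\pi$ together with $R^*\subseteq\ChO^*(X)$. You would need (i) that $\ker\pi\cap\ChO^*(X)$ equals the ideal of $\ChO^*(X)$ generated by $R^{>0}$ (only the inclusion $\supseteq\ChO^*(X)\cdot R^{>0}$ is clear), and (ii) a freeness statement --- $\Ch^*(\BX)$ and, crucially, $\ChO^*(X)$ free as $R^*$-modules --- to convert quotients into products of Poincar\'e polynomials; a surjection alone only gives additivity over the kernel. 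This freeness/basis statement for rational cycles is exactly what the paper imports from \cite{PSZ07}: its proof fixes the filtration of \cite[Definition~4.13]{PSZ07} on $\Ch^*(\BX)$, uses the explicit $\zz/p$-basis of the rational part of the associated graded ring from \cite[Proposition~4.18]{PSZ07}, and then identifies $g(\ChO^*(X),t)$ with the Poincar\'e polynomial of that associated graded by a dimension count --- directly for an arbitrary generically split $X$, with no detour through the flag variety or through $\ChO^*(G)$. Your appeal to \cite[Corollary~5.4]{PSZ07} is also off target: as used in the proof of Proposition~\ref{lemj}(2), that corollary is an invariance statement for the polynomial $\prod_i(1-t^{d_ip^{j_i}})/(1-t^{d_i})$, not a computation of the graded group $\ChO^*(G)$; moreover $\ChO^*(G)$ is defined via the flag variety and does not depend on $X$, so ``once $X$ is generically split'' cannot supply its structure, and even granting that structure, transporting it back to $\ChO^*(X)$ is precisely the missing argument.
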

\begin{proof}
In the proof of this theorem we use results established in our paper
\cite{PSZ07}.

Let $p$ be a prime integer. 
We fix preimages $e_i$ of $x_i\in\Ch^*(\BG)$. For an $r$-tuple
$M=(m_1,\ldots,m_r)$ we set $e^M=\prod_{i=1}^re_i^{m_i}$.

First we recall the definition of filtrations
on $\Ch^*(\BX)$ and $\ChO^*(X)$ (see \cite[Definition~4.13]{PSZ07}).
Given two pairs $(L,l)$ and $(M,m)$, where $L$ and $M$ are $r$-tuples
and $l$ and $m$ are integers, we say that $(L,l)\le(M,m)$ if
either $L<M$, or $L=M$ and $l\le m$.

The $(M,m)$-th term of the filtration
on $\Ch^*(\BX)$ is the subgroup of $\Ch^*(\BX)$ generated by the elements
$e^I\alpha$, $I\le M$, $\alpha\in R^{\le m}$. We denote as $A^{*,*}$
the graded ring associated to this filtration.  As
$A^{*,*}_{\rat}$ we denote the graded subring of $A^{*,*}$
associated to the subring $\ChO^*(X)\subset\Ch^*(\BX)$ of rational cycles with
the induced filtration.

Consider the Poincar\'e polynomial of $A_{\rat}$ with respect
to the grading induced by the usual grading of $\Ch^*(\BX)$.
Proposition~4.18 of \cite{PSZ07} which explicitely describes a $\zz/p$-basis
of $A^{*,*}_{\rat}$ implies that the Poincar\'e polynomial
$g(A_{\rat},t)=:\sum_{i=0}^{\dim X}a_it^i$ ($a_i\in\zz$) of
$A_{\rat}$ equals the right hand side of formula~(\ref{form}).

On the other hand, $\dim\ChO^*(X)=\dim A_{\rat}$ and 
the coefficients $b_i$ of the Poincar\'e polynomial
$g(\ChO^*(X),t)=:\sum_{i=0}^{\dim X}b_it^i$ are obviously bigger than or equal
to $a_i$ for all $i$. Therefore $g(\ChO^*(X),t)=g(A_{\rat},t)$.
This finishes the proof of the theorem.
\end{proof}

The right hand side of formula~(\ref{form}) depends only on the
value of the $J$-invariant of $G$. In turn, the left hand side
depends on the rationality of cycles on $X$. 
Available information on cycles that are rational as sure as fate,
allows to establish the following result.

\begin{thm}\label{cor1}
Let $G$ be a group of type $\Phi=\F$, $\E_6$, $\E_7$ or $\E_8$
given by a $1$-cocycle from $H^1(k,G_0)$, where $G_0$
stands for the split adjoint group of the same type as $G$,
and let $X$ be the variety of the parabolic subgroups of $G$ of type $i$.
The variety $X$ is not generically split if and only if

\begin{tabular}{|l|l|}
\hline
1&$\Phi=\F$,  $i=4$, $J_2(G)=(1)$\\
\hline
2&$\Phi=\E_6$,  $i=1,6$, $J_2(G)=(1)$\\
\hline
3&$\Phi=\E_6$,  $i=2,4$, $J_3(G)=(j_1,*)$, $j_1\ne 0$\\
\hline
4&$\Phi=\E_7$,  $i=1,3,4,6$, $J_2(G)=(j_1,*,*,*)$, $j_1\ne 0$\\
\hline
5&$\Phi=\E_7$,  $i=1,6,7$, $J_2(G)=(*,j_2,*,*)$, $j_2\ne 0$\\
\hline
6&$\Phi=\E_7$, $i=7$, $J_3(G)=(1)$\\
\hline
7&$\Phi=\E_8$, $i=1,6,7,8$, $J_2(G)=(j_1,*,*,*)$, $j_1\ne 0$\\
\hline
8&$\Phi=\E_8$, $i=7,8$, $J_3(G)=(1,*)$\\
\hline
\end{tabular}

{\footnotesize (``$*$'' means any value).}
\end{thm}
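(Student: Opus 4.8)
The plan is to prove the two implications by different mechanisms. Fix $G$ of type $\Phi\in\{\F,\E_6,\E_7,\E_8\}$ and a node $i$; write $X_i$ for the variety of parabolic subgroups of type $i$, set $K=k(X_i)$ and $H=(G_K)_{\an}$. The generic point of $X_i$ supplies a parabolic of type $i$ over $K$, so node $i$ is circled in the Tits index of $G_K$; hence $X_i$ is generically split exactly when $H$ is trivial. The tools I would use throughout are Theorem~\ref{thm1}, Proposition~\ref{lemj}, Tits' classification of the admissible indices of the four exceptional types, the tables of $p$-exceptional degrees, and the Poincar\'e-polynomial formula (Solomon's theorem) together with the Maple packages of Section~\ref{sec2}.

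Forward direction (each of the eight rows forces $X_i$ to be \emph{not} generically split): suppose $X_i$ were generically split with $J_p(G)$ as in the given row. By Theorem~\ref{thm1} the identity~(\ref{form}) would hold for that $p$, so the product $\prod_\ell(t^{d_\ell p^{j_\ell}}-1)/(t^{d_\ell}-1)$ — of positive degree $D$, since the relevant $j_\ell\ne 0$ — would divide $g(\Ch^*(\overline{X_i}),t)$ in $\zz[t]$, with quotient $g(\ChO^*(X_i),t)$ of degree $\dim X_i-D$. In several rows (for instance $\Phi=\E_7$, $i=7$, $J_3(G)=(1)$) the required cyclotomic divisibility already fails, which is an immediate contradiction. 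In the remaining rows one instead bounds $g(\ChO^*(X_i),t)$ from below: the subring of $\Ch^*(\overline{X_i})$ generated by the cycles that are rational ``as sure as fate'' — the image $R^*$ of the characteristic map, the Chern classes of the tangent bundle, pullbacks of powers of $h_A$ from Severi--Brauer varieties of the Tits algebras of $G$ (via the projective-bundle argument in the proof of Proposition~\ref{lemj}(3)), and the closure of all these under the Steenrod operations — has a Poincar\'e polynomial whose coefficient in some degree exceeds that of the quotient forced by~(\ref{form}), contradicting~(\ref{form}). These are finite verifications, and this is where the Steenrod operations genuinely enter.

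Reverse direction (every configuration not in the table gives $X_i$ generically split): here I would show $H$ is trivial. First, from Tits' list of admissible indices of $\Phi$, determine which further nodes are forced to be circled over $K$ once node $i$ is: frequently — e.g. for $\Phi=\E_8$ and $i\in\{2,3,4,5\}$, where the only admissible index with node $i$ circled is the split one — this already gives $H=1$; otherwise one gets a short list of possible Dynkin types for $H$ (such as $\D_4$ or $\E_6$ inside $\E_8$, or a subdiagram of $\E_6$ inside $\E_7$). For each such $H$ and each prime $p$ one checks $J_p(H)=0$: for non-torsion $p$ this is automatic, and for torsion $p$ one combines Proposition~\ref{lemj}(1), which gives $J_p(H)\le J_p(G_K)\le J_p(G)$ after matching up generators via the mechanism of Proposition~\ref{lemj}(2), with parts (2) and (3) of that proposition — part (2) annihilates any component whose degree does not occur among the generators of $\Ch^*(\overline H)$, and part (3) bounds a remaining degree-one component by the $p$-adic valuations of the indices of the Tits algebras of $G$, which vanish identically for $\E_8$ and for the $p$-torsion part of $\E_6$ and which are controlled for the other types precisely by the negation of the table's hypotheses. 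Once $J_p(H)$ is trivial for all $p$, Proposition~\ref{lemj}(4) applies — $H$ is a proper anisotropic kernel, so $\rk H<\rk G\le 8$ and $H$ has no $\E_8$ factor — and shows $H$ is split; being anisotropic, $H=1$, so $G_K$ is split and $X_i$ is generically split.

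The step I expect to be the main obstacle is, on the reverse side, the bookkeeping for $\Phi=\E_8$ and the mixed $\E_7$ rows (cases 4 and 5): one must pin down exactly which further nodes Tits' classification forces to be circled over $K$, and then rule out that any surviving anisotropic kernel carries a nontrivial $J$-invariant at a degree not already excluded by Proposition~\ref{lemj}(2). Because all Tits algebras of $\E_8$ are trivial, Proposition~\ref{lemj}(3) gives no leverage there, so the argument must rest on rank and type constraints read off Tits' list together with the explicit $p$-exceptional degrees. On the forward side the parallel difficulty is, in the rows where the divisibility obstruction does not already suffice, producing a concrete rational cycle of the right codimension; organizing these two finite but intricate case analyses is the bulk of the proof.
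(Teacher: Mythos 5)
Your plan is correct and follows essentially the same route as the paper: the forward direction is exactly the application of Theorem~\ref{thm1}, contradicted either by failure of the divisibility of $g(\Ch^*(\BX),t)$ or by the forced rationality of low-codimension cycles (Picard group, $\Ch^{\le 4}$ via Chern classes of $T_{\BX}$), and the reverse direction is the paper's case-by-case check that the anisotropic kernel of $G_{k(X)}$ has trivial $J$-invariant (via Proposition~\ref{lemj} and Tits' classification of admissible indices) and hence is trivial by Proposition~\ref{lemj}(4). The extra ingredients you invoke (Steenrod closure, Proposition~\ref{lemj}(3)) are not needed in the paper's argument but do no harm.
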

\begin{proof}
First we prove using Theorem~\ref{thm1} that the cases listed
in the table are not generically split. Indeed, assume ther contrary.
Then in cases 3) and 4)
the right hand side of formula~(\ref{form}) does not have a term of degree $1$.
On the other hand, the Picard group $\Pic(\BX)$ is rational.

In cases 1), 5), 7) the right hand side of formula~(\ref{form})
does not have a term of degree $3$. On the other hand, the group
$\Ch^3(\BX)$ is rational: in all these cases it is contained
in the subring generated by (rational) $\Ch^1(\BX)$, $\Ch^2(\BX)$,
and by the Chern classes of the tangent bundle $T_{\BX}$ (which are rational).

In case 8) one comes to the same contradiction considering $\Ch^4(\BX)$.
In case 2) one easiely comes to a contradiction, since in this case
the right hand side of formula~(\ref{form}) does not divide 
the Poincar\'e polynomial $g(\Ch^*(\BX),t)$.

Next we show that all other varieties not listed in the table
are generically split.
Let $G$ and $X$ be an exceptional group and
a $G$-variety not listed in the table. Consider $G_{k(X)}$.
Using Proposition~\ref{lemj}(2) one
immediatelly sees case-by-case that for all primes $p$
the $J$-invariant of the anisotropic kernel of
$G_{k(X)}$ is trivial. Therefore this anisotropic kernel is trivial
by Proposion~\ref{lemj}(4) and the group $G$ splits over $k(X)$.
\end{proof}

\begin{rem}
The cases 3) and 4) in the table above also follow from the
index reduction formula for exceptional groups \cite{MPW98}.
\end{rem}

\begin{rem}
In view of the results obtained in \cite{PSZ07} and in the present
paper the following holds:

\begin{enumerate}
\item $\Phi=\F,\E_6$, $J_2(G)=(1)$ if and only if $G$ has a non-trivial cohomological
invariant $f_3$.
\item $\Phi=\E_6$, $J_3(G)=(j_1,*)$
or $\Phi=\E_7$, $J_2(G)=(j_1,*,*,*)$, $j_1\ne 0$,
if and only if $G$ has a non-trivial Tits algebra.
\item $\Phi=\E_6,\E_7$, $J_3(G)=(1)$ if and only if $G$ has a non-trivial cohomological
invariant $g_3$.
\end{enumerate}
\end{rem}

\section{Index of groups of type $\E_7$}\label{sec6}
In this section we prove an index reduction formula for
groups of type $\E_7$. Our result can be considered as a
generalization of the usual index reduction formula
for central simple algebras.

Other variations on this theme are the Main Tool Lemma of A.~Vishik
\cite[Theorem~3.1]{Vi07} and an application of the
Rost degree formula \cite[Theorem~7.2]{Me03}.

To prove the main result of this section we use Chow motives
and Steenrod operation and a relation between rational cycles
on projective homogeneous varieties and their splitting properties.

To simplify the notation we will denote the Lefschetz motives in the category
of Chow motives with $\zz/p$-coefficient not as $(\zz/p)(i)$, but
still as $\zz(i)$.
The restriction on the characteristic in the following theorem comes
from Steenrod operations that we use in the proof,
since so far they are not constructed in characteristic $2$.

\begin{thm}\label{e7thm}
Let $G$ be an anisotropic group of type $\E_7$ and $X$ (resp. $Y$) be the
projective $G$-homogeneous variety of parabolic subgroups ot type
$1$ (resp. $7$) over a field $k$ with $\mathrm{char}\,k\ne 2$.
Then $Y$ has a $k(X)$-rational point
if and only if $Y$ has a zero-cycle of degree $1$.

In particular, $Y$ has no $k(X)$-rational points if $G$ has a non-trivial
Tits algebra, or if the absolute Galois group $\mathrm{Gal}(k_s/k)$
is a pro $p$-group, or if the Rost invariant of $G$ has order divisible by $4$,
or if $k$ is a perfect field with $\mathrm{char}\,k\ne 2,3$.
\end{thm}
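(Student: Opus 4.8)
The plan is to pass to the level of Chow motives with $\zz/2$-coefficients. Since $Y$ has type $7$ and $G$ has type $\E_7$, the group $G$ splits over $k(Y)$ only if the relevant $J$-invariants vanish; in fact the key observation is that $G_{k(X)}$ — where $X$ has type $1$ — has anisotropic kernel of type $\D_6$ precisely when the variety $Y$ stays "almost split'' over $k(X)$, in the sense that its motive over $k(X)$ contains a Tate summand $\zz(0)$. So the first step is to reformulate "$Y$ has a $k(X)$-rational point'' motivically: a smooth projective variety acquires a rational point over a field extension $L$ iff $\zz$ is a direct summand of its motive over $L$ (this uses that over $L$ the zero-section gives such a splitting, and conversely a Tate summand $\zz(0)$ in $\Mot(Y_L)$ produces a zero-cycle of degree $1$, hence by the structure of $\E_7$-homogeneous varieties a rational point). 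Thus I must show: $\zz(0)$ is a summand of $\Mot(Y)_{k(X)}$ iff $\zz(0)$ is a summand of $\Mot(Y)$ over $k$ itself, i.e. iff $Y$ has a degree-$1$ zero-cycle over $k$.

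The hard direction is that a $k(X)$-rational point on $Y$ forces a degree-$1$ zero-cycle on $Y$ over $k$; the converse is trivial since $G$ anisotropic means $Y$ has no $k$-point, but a degree-$1$ zero-cycle on $Y$ already makes $G$ isotropic (by Springer-type / Tits-index arguments, any degree-$1$ zero-cycle on the type-$7$ variety of an $\E_7$ forces a rational point, because the possible Tits indices of $\E_7$ leave no room for a variety of type $7$ that is anisotropic but has a zero-cycle of degree $1$) — actually one should argue: if $Y$ has a zero-cycle of degree $1$ then $G$ becomes isotropic over some odd-degree extension, and odd-degree extensions do not change the Tits index of an inner $\E_7$ (this is the content combined with $J_2$ being a $2$-primary invariant), so $G$ is already isotropic over $k$, contradicting anisotropy — unless the degree-$1$ cycle is in fact supported by a genuine $k(X)$-point. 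So the real statement is the equivalence, and the engine is a rationality-of-cycles computation. Concretely: take the motivic decomposition of $X$ over $k$; by \cite{CGM05},\cite{Br05} combined with anisotropy, $\Mot(X)$ decomposes into twisted motives of anisotropic $G$-homogeneous varieties, and similarly $X\times Y$. The existence of a $k(X)$-point on $Y$ is equivalent to the existence of a rational cycle on $\BX\times\BY$ that is a "graph-like'' class projecting to the fundamental class of a point on $\BY$ and to $[\BX]$; equivalently, to the rationality over $k$ of a certain codimension-$\dim Y$ cycle in $\Ch^*(\overline{X\times Y})$.

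So the key steps, in order: (1) reduce to showing a specific cycle $\rho \in \Ch^{\dim Y}(\overline{X\times Y})$ is $k$-rational iff $Y$ has a degree-$1$ zero-cycle over $k$; (2) start from the cycles that are rational "as sure as fate'': the diagonal, the Picard group $\Pic(\BX)$ and $\Pic(\BY)$ (rational since the Tits algebras enter here — and here one uses that in an \emph{anisotropic} $\E_7$ the Tits algebra is at most of index $2$, so the relevant $h$-classes modulo $2$ behave well), and the Chern classes of the tangent bundles; (3) apply the total Steenrod operation $S$ to manufacture new rational cycles — this is exactly where $\mathrm{char}\,k\ne 2$ is used — pushing powers and Steenrod images of the known rational classes around, and then push forward along the projection $X\times Y \to X$ (a projective bundle or at least a cellular fibration after base change, so that pushforward/pullback of cycles is controlled) to land the class $\rho$ in the rational subring; (4) conversely, if $\rho$ is rational, trace through the motivic decomposition to extract a Tate summand $\zz(0)$ in $\Mot(Y)_{k(X)}$, hence a $k(X)$-point. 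The main obstacle will be step (3): identifying the precise combination of Steenrod operations and intersection products — carried out on the explicit Hasse diagram of the type-$1$ and type-$7$ $\E_7$-varieties, using the Maple package \cite{map} and \cite{Du07} — that produces the sought cycle $\rho$ from the a priori rational classes, and verifying that the output of $S$ does not accidentally vanish modulo $2$.

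Finally, the "in particular'' clauses follow by checking in each case that $Y$ has no degree-$1$ zero-cycle: if $G$ has a non-trivial Tits algebra then by Proposition~\ref{lemj}(3) (with $d_i=1$) $J_2(G)\ne 0$ and the index-reduction / motivic obstruction kills any such cycle; if $\mathrm{Gal}(k_s/k)$ is pro-$2$ (or more generally pro-$p$) then every closed point of an anisotropic variety has even (resp. $p$-divisible, hence even on the relevant $2$-primary part) degree, so no degree-$1$ zero-cycle exists; if the Rost invariant of $G$ has order divisible by $4$ then by Corollary~\ref{coro2} (triviality of the kernel of the Rost invariant for $\E_7$) together with the known behaviour of the Rost invariant under odd-degree extensions, $G$ remains anisotropic over every odd-degree extension, so again there is no degree-$1$ zero-cycle; and if $k$ is perfect with $\mathrm{char}\,k\ne 2,3$ the same conclusion follows by combining the previous cases with \cite{Gi97}-type bounds on the torsion index, since the torsion index of an anisotropic $\E_7$ over such a field forces all zero-cycle degrees to be even. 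In every case the equivalence just proved then gives "$Y$ has no $k(X)$-rational point'', as claimed.
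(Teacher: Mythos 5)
Your outline reproduces the paper's toolbox (motives, rational cycles, Steenrod operations), but the two places where you declare the problem essentially solved are exactly where the content lies, and both have genuine gaps. First, the reduction ``$Y$ has an $L$-point iff $\zz(0)$ is a summand of $\Mot(Y_L)$'' conflates rational points with zero-cycles of degree $1$: a Tate summand $\zz$ only produces a degree-$1$ zero-cycle, and upgrading that to a rational point on a projective $\E_7$-homogeneous variety is precisely Serre's question --- the very issue the theorem addresses (settled for this $Y$ only in \cite{GS}); asserting it ``by the structure of $\E_7$-homogeneous varieties'' begs the question. Likewise your claim that the direction ``degree-$1$ zero-cycle $\Rightarrow$ $k(X)$-point'' is trivial because ``odd-degree extensions do not change the Tits index of an inner $\E_7$'' is unsupported; if such an odd-degree descent were available, anisotropy of $Y$ would rule out degree-$1$ zero-cycles outright and both the theorem and \cite{GS} would be vacuous. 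The paper instead proves this direction by reducing to trivial Tits algebras via index reduction and decomposing $\Mot(Y_{k(X)})\simeq\Mot(Q)\oplus\Mot(Z)(6)\oplus\Mot(Q)(17)$ for a quadric $Q$, so that Springer's theorem gives $\deg\CH_0(Y_{k(X)})=2\zz$. (Also, your aside that the Tits algebras of an anisotropic $\E_7$ have index at most $2$ is false: they have exponent $2$, but the index can be larger, cf.\ the remark after Proposition~\ref{lemj}.)

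Second, for the hard direction ($k(X)$-point $\Rightarrow$ degree-$1$ zero-cycle) you admit that ``the main obstacle will be step (3)'', i.e.\ you defer the actual construction; but that construction is the proof. The paper's argument is a specific contradiction under the hypotheses ``no degree-$1$ zero-cycle'' and ``$Y(k(X))\ne\emptyset$'': it shows $J_2(G)\ne 0$ and that $X,Y$ are not generically split (Claim~\ref{clm3}, using \cite[Corollary~3.4]{Fe72} and \cite[Corollary~6.10]{PSZ07}); extracts a common indecomposable summand $R$ of $\Mot(X)$ and $\Mot(Y)$ and pins down $R_{k(X)}\simeq\zz\oplus\zz(17)\oplus R'$ (Claim~\ref{clm6}); uses the particular codimension-$17$ class $f$, rational over $k(X)$, with $S^{16}(f)=\pt$, the idempotent-power trick and Krull--Schmidt to force a summand $g\times h$ with $g\in\Ch^9(\BX)$, $h\in\Ch^8(\BX)$ inside the rational cycle $a=1\times f+\cdots$; and finally multiplies by $1\times c_{16}(T_{\BX})$ and plays the Poincar\'e polynomial $1+t^3$ of the Rost motives of the $\D_4$ anisotropic kernel over $k(X)$ against the computed relations $c\cdot\Ch^6(\BX)=c\cdot\Ch^{12}(\BX)=0$ (Claim~\ref{computer}) to reach a contradiction. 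None of these ingredients, nor any substitute for them, appears in your plan. Finally, among the ``in particular'' clauses, invoking Corollary~\ref{coro2} for the Rost-invariant case is circular (that corollary is deduced from this theorem), and the perfect-field case rests on \cite{GS}, not on torsion-index bounds.
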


\begin{proof}
Before proving this theorem we discuss briefly the plan of the proof.
First, assuming that $Y$ has a $k(X)$-rational point we find a rational
cycle, say $\alpha$, in codimension $17$ on the product $X\times X$. Using Steenrod operations
we produce certain projector-like cycle, say $\beta$, on $X\times X$.
Applying duality arguments to $\beta$ we find a sub-cycle, say $\gamma$, in the cycle
$\alpha$. On the other hand, multiplying $\alpha$ by a certain cycle known to be
always rational, we obtain again a projector-like cycle, but of another
shape because of the existence of $\gamma$ inside of $\alpha$. Applying
duality arguments again we come to a contradiction.

From now on an till the end of the proof of this theorem we set $p=2$.
To simplify the notation we denote as $\pt=[X_1]$ the class of a rational
point on $\BX$ (or $\overline{Y}$).

Assume first that the variety $Y$ does not have a zero-cycle of degree $1$,
but $Y$ has a $k(X)$-rational point.
All claims below are proved under this assumption.

\begin{clm}\label{clm3}
The varieties $X$ and $Y$ are not generically split.
\end{clm}
Assume that $J_2(G)$ is trivial. Then $G$ splits over an odd degree field
extension by \cite[Corollary~6.10]{PSZ07}. On the other hand, the variety
$Y$ becomes isotropic over a quadratic extension of $k$
by \cite[Corollary~3.4]{Fe72}. Therefore $Y$ has a zero-cycle of degree $1$.
Contradiction.

Thus, $J_2(G)$ is not trivial.
Now the variety $X$ is not generically split by Corollary~\ref{cor1}.
On the other hand, the variety $Y$ is
not generically split, since it has a rational point over $k(X)$
and $G$ is not split over $k(X)$.

\medskip
\medskip

In the following claims we use the Sweedler notation for Hopf
algebras to denote the cycles in the Chow rings of projective
homogeneous varieties, i.e., we do not write the sums and the indices.
\begin{clm}\label{clm4}
Some power of any element in a finite monoid is an idempotent.
In particular, for $q=1\times \pt+x_{(1)}\times x_{(2)}\in\Ch^{\dim X}(\BX\times\BX)$ with
$x_{(2)}\in\Ch^{<\dim X}(\BX)$
there exists $n\in\nn$ such that $q^{\circ n}$ is a non-trivial projector.
\end{clm}
The first statement of the claim is well known. For completeness we give
its proof.
Let $q$ be an element of our monoid. Since the monoid is finite,
we can find in the sequence $\{q^i\}_{i\in\nn}$ two equal cycles
$q^{n_1}$ and $q^{n_2}$ with $n_2\ge 2n_1$. Define $n=(n_2-n_1)n_1$.
Then $q^{2n}=q^{n}$, i.e., $q^{n}$ is an idempotent.

\begin{lem}
Let $X$ and $Y$ be arbitrary projective homogeneous varieties such that
$X_{k(Y)}$ and $Y_{k(X)}$ have zero-cycles of degree prime to $p$. Then 
the Chow motives $\Mot(X)$ and $\Mot(Y)$ with $\zz/p$-coefficients
have a common non-trivial
direct summand $R$ such that $R_{k_s}\simeq\zz\oplus M$
for some motive $M$.
\end{lem}
\begin{proof}
Since $X$ has a $k(Y)$-rational cycle of degree prime to $p$ and
$Y$ has a $k(X)$-rational cycle of degree prime to $p$, we can apply \cite[Lemma~1.6]{PSZ07} (generic point argument)
and get two cycles $\alpha\in\Ch^{\dim Y}(X\times Y)$ and
$\beta\in\Ch^{\dim X}(Y\times X)$ such that $\overline{\alpha}=
1\times \pt+x_{(1)}\times x_{(2)}$ and $\overline{\beta}=
1\times \pt+ x'_{(1)}\times x'_{(2)}$. The compositions
$\alpha\circ\beta$ and $\beta\circ\alpha$ give cycles on
$Y\times Y$ and $X\times X$ as in the previous claim. Therefore
some powers $(\alpha\circ\beta)^{\circ n}$ and $(\beta\circ\alpha)^{\circ n}$
define projectors over $k_s$. The mutually inverse isomorphisms
between the motives corresponding to these projectors are given
by the rational maps $\alpha$ and $\beta\circ(\alpha\circ\beta)^{\circ n-1}$.
Applying the Rost nilpotence theorem \cite[Section~8]{CGM05} we finish the proof
of the lemma.
\end{proof}

Now it easiely follows from the classification of Tits indices that
our varieties $X$ and $Y$ of the Theorem have a common motivic summand as above.
We denote this summand as $R$.

\begin{clm}\label{clm6}
$R_{k(X)}\simeq \zz\oplus\zz(17)\oplus R'$ for some motive $R'$.
\end{clm}
Consider the motive $R_{k(X)}$. We claim first that $R_{k(X)}\simeq
\zz\oplus\zz(l)\oplus R'$ for some motive $R'$ and some $l\in\zz$.
Indeed, let $q\in\Ch_{\dim X}(X\times X)$
be a projector corresponding to $R$. Consider
$\overline q$ over $k_s$. The cycle $\overline q\cdot\overline q^t$,
where $\overline q^t$ denotes the transposed cycle, equals
$n\pt\times\pt$, where $n$ is the dimension of the realization
$\Ch^*(X,\overline q)$. The number $n$ is, of course, even, since
otherwise we get a rational zero-cycle on $X$ of odd degree, and,
since by \cite[Corollary~3.4]{Fe72} the group $2\CH_0(\bar Y)$ is rational,
we get a zero-cycle of degree $1$ on $Y$ which contradicts the assumptions
of the Theorem. On the other hand,
the Krull-Schmidt theorem \cite[Theorem~9.6]{CM06} in the category of
Chow motives implies that $R_{k(X)}$ is a direct sum of the Lefschetz motives
and the indecomposable Rost motives. (The Rost motives are indecomposable,
since $(G_{k(X)})_{\an}$ has type $\D_4$ by our assumptions and
by Claim~\ref{clm3}). Therefore $R_{k(X)}$ must contain 
as a direct summand some Lefschetz motive $\zz(l)$ matching with $\zz$.

Next we compute $l$.
By assumptions the motive $R$ is a common motive of $X$ and $Y$.
Therefore the number $l$ has the property that $\zz(l)$ is a direct
summand of the motives $\Mot(X)$ and $\Mot(Y)$ over $k(X)$ (or over $k(Y)$).
Using the Hasse diagrams of $X$ and $Y$ (see \cite[Figures~21 and 23]{PlSeVa})
one can easiely see as in Example~\ref{exe7} that there is only
one such common dimension, namely $l=17$.

\medskip

In the following claim we put all computations that we need to prove
our theorem. The computations were done using algorithms described in
Section~\ref{sec2}. As $[i_1,\ldots,i_l]$ we denote the product
$s_{i_1}\ldots s_{i_l}$ in the Weyl group.
\begin{clm}\label{computer}
a) The $16$-th Steenrod operation (modulo $2$) of
$$f:=Z_{[7, 6, 5, 4, 3, 2, 4, 5, 6, 1, 3, 4, 5, 2, 4, 3, 1]}\in\Ch^{17}(\BX)$$
equals $S^{16}(f)=X_{[1]}=\pt$.

b) The $16$-th Chern class of the tangent bundle $T_{\BX}$ of $\BX$
equals 
\begin{align*}
c:=Z_{[6, 5, 4, 2, 3, 1, 4, 3, 5, 4, 2, 6, 5, 4, 3, 1]}
+Z_{[4, 2, 3, 1, 4, 3, 6, 5, 4, 2, 7, 6, 5, 4, 3, 1]}\\
+Z_{[4, 3, 1, 5, 4, 3, 6, 5, 4, 2, 7, 6, 5, 4, 3, 1]}\in\Ch^{16}(\BX).
\end{align*}

c) For any $g\in\Ch^9(\BX)$ and $h\in\Ch^8(\BX)\quad S^8(g)S^8(h)=cgh$.

d) $c\cdot\Ch^6(\BX)=c\cdot\Ch^{12}(\BX)=0$.

e) $S^8(\Ch^8(\BX))=(\zz/2)c$.
\end{clm}

{\it Proof of Theorem~\ref{e7thm}:}
Consider the cycle
$f=Z_{[7, 6, 5, 4, 3, 2, 4, 5, 6, 1, 3, 4, 5, 2, 4, 3, 1]}\in\Ch^{17}(\BX)$.
This cycle is rational over
$k(X)$ by \cite[Proposition~6.1]{CGM05}. Indeed, the Hasse diagram for $\BX$ is represented
in \cite[Figure~23]{PlSeVa} (That figure contains the left half
of the Hasse diagram which is too big to be represented
in whole. One should symmetrically reflect that diagram to get a complete
picture). Since by assumption the group $(G_{k(X)})_{\an}$ has type $\D_4$,
one should erase from the Hasse diagram of $\BX$ all edges
with labels $1$, $6$, and $7$. One immediately sees that the vertex
corresponding to
$f$ splits away from the diagram. Thus, $f$ is defined over $k(X)$.

By \cite[Lemma~1.6]{PSZ07} the cycle
$a:=1\times f+x_{(1)}\times x_{(2)}\in\Ch^{17}(\BX\times\BX)$,
$x_{(2)}\in\Ch^{<17}(\BX)$,
is rational (i.e., defined over $k$). By Claim~\ref{computer}(a) the cycle
$S^{16}(a)=
1\times \pt+x'_{(1)}\times x'_{(2)}\in\Ch^{33}(\BX\times\BX)$,
$x'_{(2)}\in\Ch^{<33}(\BX)$ ($\dim \BX=33$).
Therefore using Claim~\ref{clm4} one obtains a (rational)
projector on $\BX\times\BX$. We denote this projector as $q$.

We claim that this projector contains a summand of the form $r\times s$
with $r\in\Ch^{17}(\BX)$, $s\in\Ch^{16}(\BX)$ and $rs=\pt$.
Indeed, by Claim~\ref{clm6} the motive $\Mot(X)$ has an
indecomposable direct summand $R$ such the projector
corresponding to $R_{k_s}$ contains the sum $1\times \pt+r'\times s'$ with
$r'\in\Ch^{17}(\BX)$, $s'\in\Ch^{16}(\BX)$ and $r's'=\pt$.
Our projector $q$ contains
the summand $1\times \pt$. Therefore the Krull-Schmidt theorem for
Chow motives with finite coefficients \cite[Theorem~9.6]{CM06} implies that
$q$ must also contain a summand $r\times s$ with
$r\in\Ch^{17}(\BX)$, $s\in\Ch^{16}(\BX)$ and $rs=\pt$.

Since $q$ comes from the cycle $a$ by means of $S^{16}$,
the cycle $a$ contains a summand $g\times h$ with $g\in\Ch^9(\BX)$
and $h\in\Ch^8(\BX)$. Indeed, otherwise we can't get the cycle $r\times s$
in $q$, since $S^i(\alpha)=0$ for all $\alpha\in\Ch^{<i}(\BX)$. Moreover,
$a$ contains a summand $g\times h$ ($g\in\Ch^9(\BX)$, $h\in\Ch^8(\BX)$)
with $S^8(g)S^8(h)=\pt$, since $rs=\pt$.

Consider now the (rational)
product $(1\times c)\cdot a\in\Ch^{33}(\BX\times\BX)$.
This product contains the sum $1\times cf+g\times ch=1\times \pt+
g\times ch$. As in Claim~\ref{clm4} we may assume
that $(1\times c)\cdot a$ is a projector which contains
the summands $1\times \pt$ and $g\times ch$ by Claim~\ref{computer}(c).
The motive corresponding to this sum has the Poincar\'e polynomial
$1+t^9$. Consider now the motive of $X$ over $k(X)$. As in Example~\ref{exe7}
one can see that $\Mot(X)$ splits as a direct sum of (twisted) Lefschetz
motives and (twisted) Rost motives corresponding to the anisotropic kernel
of $G_{k(X)}$ which has strongly inner type $\D_4$ by our assumptions
and by Claim~\ref{clm3}.
The Poincar\'e polynomial of the Rost motive appearing in the motivic
decomposition is $1+t^3$. Moreover, the Lefschetz motive $\zz(9)$ does not
appear in the motivic decomposition. Therefore
by the Krull-Schmidt theorem \cite[Theorem~9.6]{CM06} the summand $g\times ch$
of the product $(1\times c)\cdot a$ is a part of a twisted Rost motive.
Therefore the product $(1\times c)\cdot a$ must contain a matching summand
for $g\times ch$. Since the Poincar\'e polynomial of our Rost
motive is $1+t^3$ the matching to $g\times ch$ summand has the form
$\widetilde g\times c\widetilde h$ with $\widetilde g\in\Ch^{9\pm 3}(\BX)$.
This leads to a contradiction with Claim~\ref{computer}(d).

Assume now that $Y$ does not have a $k(X)$-rational point. We will show that
the variety $Y$ does not have a zero-cycle of degree $1$.
We may assume that the Tits algebras of $G$ are trivial, since over each field
which makes $Y$ isotropic the Tits algebras of $G$ are split and, hence,
$Y$ does not have a zero-cycle of degree $1$.

Consider $Y_{k(X)}$.
Using the method of Chernousov-Gille-Merkurjev and
Brosnan (see Section~\ref{sec2}) one can decompose its motive as follows:
$\Mot(Y_{k(X)})\simeq\Mot(Q)\oplus\Mot(Z)(6)\oplus\Mot(Q)(17)$,
where $Q$ is the quadric corresponding to $G_{k(X)}$
and $Z$ is its
maximal orthogonal Grassmannian. In particular, the image of the
degree map $\deg\colon\CH_0(Y_{k(X)})\to\zz$ coincides with
the image of $\deg\colon\CH_0(Q)\to\zz$ which is known to be
$2\zz$ by Springer's theorem. Thus, $Y_{k(X)}$ does not have a zero-cycle
of degree $1$ and therefore $Y$ also doesn't. The theorem is proved.

The variety $Y$ does not have a zero-cycle of degree $1$ in the cases
listed in the Theorem either by obvious reasons, or by \cite{GS}.
\end{proof}

\begin{rem}
The index reduction formula for central simple algebras
\cite[Section~8, Type $\E_7$]{MPW98} implies
that Theorem~\ref{e7thm} holds if the group $G$ has a non-trivial Tits algebra.
Our proof does not depend on the (non)-triviality of the Tits algebras.
\end{rem}

\begin{rem}
The following papers discuss the problem of the existence of zero-cycles
of degree $1$ on anisotropic varieties: \cite{Fl04}, \cite{Par05},
\cite{Serre}, \cite{To04}.
\end{rem}

Now we summarize the results of the present paper and
provide a complete list of the the higher Tits indices
for anisotropic groups of type $\F$ and $^1{\E_6}$.

\medskip

\begin{tabular}{|r|r|r|}
\hline\label{tab}
$\F$ & $J_2=(0)$ & $\{1,\F\}$\\
\hline
$\F$ & $J_2=(1)$ & $\{1,\B_3,\F\}$\\
\hline
$\E_6$ & $J_2=(0)$, $J_3=(0,1)$ & $\{1,\E_6\}$\\
\hline
$\E_6$ & $J_2=(1)$, $J_3=(0,*)$ & $\{1,\D_4,\E_6\}$\\
\hline
$\E_6$ & $J_2=(0)$, $J_3=(j_1,*)$, $j_1\ne 0$ & $\{1,2\A_2,\E_6\}$\\
\hline
$\E_6$ & $J_2=(1)$, $J_3=(j_1,*)$, $j_1\ne 0$ & $\{1,2\A_2,\D_4,\E_6\}$\\
\hline
\end{tabular}

\medskip

Let now $G$ be an anisotropic group of type $\E_7$ with trivial
Tits algebras over a field $k$.
Denote as $\Omega(G)$ the higher Tits index of $G$.
Theorems~\ref{cor1} and \ref{e7thm} imply that

\noindent
$\D_4\in\Omega(G)$ iff $J_2(G)$ is non-trivial,

\noindent
$\E_6\in\Omega(G)$ iff $J_3(G)$ is non-trivial, and

\noindent
if $\mathrm{char}\,k\ne 2$, then
$\D_6\in\Omega(G)$ iff the variety of parabolic subgroups of $G$
of type $7$ does not have a zero-cycle of degree $1$.

In partucular, using \cite[Corollary~2.2]{GS} we have:
\begin{prop}\label{d6}
Let $k$ be a perfect field and $\mathrm{char}\,k\ne 2,3$.
Then $\Omega(G)\supset\{1,\D_4,\D_6,\E_7\}$. In particular, there exists
a field extension $K/k$ such that $G_K$ has semisimple anisotropic kernel
of type $\D_6$.
\end{prop}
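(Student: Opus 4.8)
The plan is to realize each of the four indices $1$, $\D_4$, $\D_6$, $\E_7$ as the type of the semisimple anisotropic kernel of $G_K$ for a suitable extension $K/k$, using the two equivalences recorded in the paragraph preceding the statement (consequences of Theorems~\ref{cor1} and \ref{e7thm}) together with \cite[Corollary~2.2]{GS}.

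Two of the four come for free. The index $\E_7$ occurs already over $K=k$, since $G$ is anisotropic of type $\E_7$ by hypothesis; and the split index $1$ occurs over any splitting field of $G$, e.g.\ $K=k_s$. For $\D_6$: because $k$ is perfect with $\mathrm{char}\,k\ne 2,3$, the variety $Y$ of parabolic subgroups of $G$ of type $7$ has no zero-cycle of degree $1$ by \cite[Corollary~2.2]{GS} (equivalently, by the last clause of Theorem~\ref{e7thm}, it has no $k(X)$-rational point). By the equivalence ``$\D_6\in\Omega(G)$ $\iff$ $Y$ has no zero-cycle of degree $1$'' this yields $\D_6\in\Omega(G)$, realized over $K=k(Y)$ (over which the node $7$ becomes circled and the anisotropic kernel has type $\D_6$).

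For $\D_4$ I would use the equivalence ``$\D_4\in\Omega(G)$ $\iff$ $J_2(G)\ne 0$'', so it suffices to show $J_2(G)$ is non-trivial. If $J_2(G)$ were trivial, then $G$ would split over a field extension of odd degree by \cite[Corollary~6.10]{PSZ07}, producing a zero-cycle of odd degree on $Y$, while $Y$ becomes isotropic over a quadratic extension of $k$ by \cite[Corollary~3.4]{Fe72}, producing a zero-cycle of degree $2$ on $Y$; a suitable $\zz$-linear combination of these then has degree $1$, contradicting the preceding paragraph. (This is precisely the argument of Claim~\ref{clm3}.) Hence $J_2(G)\ne 0$ and $\D_4\in\Omega(G)$. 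Collecting the four cases gives $\Omega(G)\supset\{1,\D_4,\D_6,\E_7\}$, and the final ``in particular'' assertion is simply $\D_6\in\Omega(G)$ unwound. The only step that is more than bookkeeping is the non-triviality of $J_2(G)$: rather than trying to derive $\D_4\in\Omega(G)$ from $\D_6\in\Omega(G)$ by analyzing the generic splitting tower of the $\D_6$-anisotropic kernel, I would extract it from the absence of a degree-$1$ zero-cycle on $Y$ as above.
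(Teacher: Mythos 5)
Your overall route is the paper's own: the Proposition is stated as an immediate consequence of the three displayed equivalences together with \cite[Corollary~2.2]{GS}, and your treatment of $\E_7$, $1$, and of $\D_4$ (deducing $J_2(G)\ne 0$ from the absence of a degree-$1$ zero-cycle on $Y$ via \cite[Corollary~6.10]{PSZ07} and \cite[Corollary~3.4]{Fe72}, exactly as in Claim~\ref{clm3}) is correct and is what the paper intends.

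There is, however, one concrete error: the parenthetical claim that the index $\D_6$ is ``realized over $K=k(Y)$ (over which the node $7$ becomes circled and the anisotropic kernel has type $\D_6$)''. This cannot happen: if the node $7$ is circled, the semisimple anisotropic kernel lives on the subdiagram spanned by the nodes $1,\dots,6$, which is of type $\E_6$, so its type can be $\E_6$ or smaller, but never $\D_6$. The $\D_6$ kernel corresponds to the index in which only the node $1$ is circled, and it is realized over $K=k(X)$, where $X$ is the variety of parabolic subgroups of type $1$. Indeed, this is how the equivalence ``$\D_6\in\Omega(G)$ iff $Y$ has no zero-cycle of degree $1$'' is obtained from Theorem~\ref{e7thm}: no zero-cycle of degree $1$ on $Y$ means no $k(X)$-point on $Y$, i.e.\ the node $7$ stays uncircled over $k(X)$, and for a strongly inner form of $\E_7$ the only admissible index with node $1$ circled and node $7$ uncircled is the one with kernel $\D_6$. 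Since you invoke the recorded equivalence as a black box, your conclusion $\D_6\in\Omega(G)$ (and hence the ``in particular'' clause, by the very definition of $\Omega(G)$) still stands; but the explicit field you name is the wrong one, and the roles of $X$ and $Y$ should be corrected accordingly.
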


The following corollaries of this statement
and their proofs are due to S.~Garibaldi.

\begin{cor}\label{coro1}
Let $G$ be a group of type $\E_7$ with trivial Tits algebras
over a perfect field with $\mathrm{char}\,k\ne 2,3$.
Then we can speak about {\it the Rost invariant} $r(G)$ of $G$
(see \cite[\S~31.B]{Inv}).
Assume that $r(G)$ lies in $H^3(k,\zz/4)$ and is a symbol.
Then $G$ is isotropic.
\end{cor}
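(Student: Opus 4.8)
The plan is to argue by contradiction: suppose $G$ is anisotropic. First I would reduce to the simply connected case, replacing $G$ by its simply connected cover — the relevant cocycle lifts because the Tits algebras are trivial, and this changes neither the isotropy of $G$ nor the value $r(G)$. So assume $G$ is simply connected of type $\E_7$, anisotropic. The first real step is to attach a quadratic form to $r(G)$ through the splitting behaviour of $G$. By Proposition~\ref{d6} there is a field extension $K/k$ over which the semisimple anisotropic kernel of $G_K$ has type $\D_6$. Being the derived group of the Levi part of the parabolic $P_1$ of a simply connected group, it is isomorphic to $\mathrm{Spin}(q)$, and since the Tits algebras of $G$ — hence those of this $\D_6$-subgroup — are trivial, $q$ is a $12$-dimensional quadratic form over $K$ with trivial discriminant and trivial Clifford invariant, i.e.\ $q\in I^3(K)$; moreover $q$ is anisotropic. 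As $\dim q=12<16$, the Arason--Pfister Hauptsatz gives $q\notin I^4(K)$, so $e_3(q)\ne 0$. Since $\mathrm{Spin}_{12}\hookrightarrow\E_7$ is a Levi embedding it has Rost multiplier $1$, and the Rost invariant is compatible with scalar extension and with passage to the semisimple anisotropic kernel; hence
\[
\res_{K/k}\bigl(r(G)\bigr)=r\bigl(\mathrm{Spin}(q)\bigr)=e_3(q)\neq 0 .
\]
In particular $r(G)\neq 0$ (if it were $0$, Corollary~\ref{coro2} already gives that $G$ is split).

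Next I would use the hypothesis that $r(G)$ is a symbol of order dividing $4$. Reducing the symbol modulo $2$ (the coefficient map induced by $\mu_4\to\mu_2$), its image in $H^3(k,\zz/2)$ is a symbol $e_3(\pi)$ for a $3$-fold Pfister form $\pi$ over $k$; since $e_3(q)$ is already $2$-torsion, restricting gives $e_3(q)=\res_{K/k}e_3(\pi)=e_3(\pi_K)$, and $\pi_K$ must be anisotropic (otherwise $e_3(q)=0$). Equivalently, a symbol in $H^3(k,\zz/4)$ is split by an extension $L/k$ of degree dividing $4$ of controlled shape, and over $L$ we have $r(G_L)=0$, hence by Corollary~\ref{coro2} together with the triviality of the Tits algebras the group $G_L$ is split. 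Both formulations say that $G$ becomes split over a small extension, respectively over the function field of the (at most $6$-dimensional) Pfister quadric of $\pi$.

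The last step, and the hard part, is to turn this into a contradiction with the anisotropy of $G$. On the form side one has $q\perp(-\pi_K)\in I^4(K)$ with anisotropic part of dimension $\le 20$, hence of dimension $16$ (dimension $0$ is impossible since $q$ and $\pi_K$ are anisotropic of different dimensions, and $I^4$ contains no anisotropic forms of dimension $18$ or $20$), so this anisotropic part is similar to a $4$-fold Pfister form and $q$ lies, after adding a $3$-fold and a scaled $4$-fold Pfister form, inside a highly split $24$-dimensional form. The point where real work is needed is to feed in the extra constraint that $q$ actually arises as the $\D_6$-kernel of a group of type $\E_7$ with trivial Tits algebras: either one invokes Garibaldi's classification of such groups (and the incompatibility of the above degenerate position of $q$ with $G$ being $k$-anisotropic), or one proves the auxiliary rigidity statement that a group of type $\E_7$ with trivial Tits algebras which becomes split over an extension of degree dividing $4$ must be isotropic, using a canonical-$2$-dimension or zero-cycle estimate on $Y=G/P_7$ against Proposition~\ref{d6}, which guarantees that $Y$ has no zero-cycle of degree $1$ when $G$ is anisotropic. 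Either route yields the desired contradiction, so $G$ is isotropic. The routine parts are the setup and the identification $\res_{K/k}r(G)=e_3(q)$; the main obstacle is precisely this final translation of "$r(G)$ is a small symbol" into a contradiction, and the two technical inputs to watch are the list of dimensions occurring for anisotropic forms in $I^4$ and the behaviour of the canonical $2$-dimension of $G$ under the relevant function-field extensions.
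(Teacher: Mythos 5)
Your setup follows the paper's line exactly: assume $G$ anisotropic, take $K$ as in Proposition~\ref{d6}, identify the semisimple anisotropic kernel of $G_K$ with $\mathrm{Spin}(q)$ for an anisotropic $12$-dimensional $q\in I^3(K)$, and use trivial Tits algebras plus Rost multiplier $1$ to get that $e_3(q)$ is (the restriction of) the symbol $r(G)$. Up to that point the proposal is fine. The gap is in the last step, which you yourself flag as ``the hard part'' and then do not close. The paper closes it with a single purely quadratic-form-theoretic input, \cite[Lemma~12.5]{Ga07}: a $12$-dimensional form in $I^3$ whose Arason invariant is a symbol is isotropic. Since $q$ is the form of the \emph{anisotropic} kernel, this is an immediate contradiction; no further information about the $\E_7$-structure of $G$ is needed. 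You actually start reproducing the proof of that lemma (writing $q\perp(-\pi_K)\in I^4(K)$ and using the hole in $I^4$ from \cite{Ka04} to force a $16$-dimensional anisotropic part similar to a $4$-fold Pfister form), but then you stop and claim that the contradiction requires ``feeding in the extra constraint that $q$ arises as the $\D_6$-kernel of a group of type $\E_7$'' --- it does not, and neither of the two substitutes you offer is carried out.

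Both substitutes are problematic as stated. The route via ``a group of type $\E_7$ with trivial Tits algebras split by an extension of degree dividing $4$ is isotropic'' is an unproven auxiliary claim, and the way you reach it invokes Corollary~\ref{coro2}; inside this paper that is circular, since the proof of Corollary~\ref{coro2} is deduced from the very Corollary~\ref{coro1} you are proving (the earlier parenthetical use of Corollary~\ref{coro2} to get $r(G)\ne 0$ has the same defect, though it is inessential). The route via ``Garibaldi's classification of such groups'' is too vague to check and, again, is not what is needed: the needed statement is the form-theoretic one above. A correct completion of your own computation would be: from $q\perp(-\pi_K)\sim c\rho$ with $\rho$ a $4$-fold Pfister form, a linkage/common-subform argument shows $q$ is isotropic --- this is exactly \cite[Lemma~12.5]{Ga07} --- contradicting the anisotropy of the kernel. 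Without that lemma (or a proof of it), the argument is incomplete.
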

\begin{proof}
For sake of contradiction, suppose $G$ is anisotropic. Take $K$ as
in Proposition~\ref{d6}. By assumptions the group $G\otimes_kK$ has
semisimple anisotropic kernel $\mathrm{Spin}(q)$ for a $12$-dimensional
quadratic form $q$ with the Arason invariant $e_3(q)$ a symbol.
By \cite[Lemma~12.5]{Ga07} the form $q$ is isotropic.
\end{proof}

\begin{cor}[Kernel of the Rost invariant]\label{coro2}
Let $G_0$ denote a split simply connected group of type $\E_7$ over
an arbitrary field $k$.
Suppose $\eta\in H^1(k,G_0)$ and the Rost invariant of $\eta$
is trivial. Then $\eta=0$.
\end{cor}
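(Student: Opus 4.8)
The plan is to reduce the statement to Corollary~\ref{coro1} by a standard specialization argument. Suppose $\eta \in H^1(k,G_0)$ has trivial Rost invariant but $\eta \neq 0$; let $G = {}_\eta G_0$ be the corresponding anisotropic group of type $\E_7$. The first step is to note that $G$ has trivial Tits algebras: since $G$ is simply connected of type $\E_7$, its Tits algebras are controlled by the component of the Rost invariant (or, more elementarily, the Tits algebra of $G$ is a quaternion-or-split algebra whose class is detected by $H^1(k,G_0) \to H^1(k,\mu_2)$-type data), and triviality of the full Rost invariant forces them to be split. One should be a little careful here and instead appeal to the fact — which is how Garibaldi's argument runs — that over a suitable extension the relevant cohomological information is preserved; but morally $G$ is a group of type $\E_7$ with trivial Tits algebras to which the results of Section~\ref{sec6} apply.

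Next I would arrange the base field to be perfect of characteristic $\neq 2,3$. This is the key reduction and the main obstacle: Corollary~\ref{coro1} (and Proposition~\ref{d6}) were proved only under that hypothesis, because the proof of Theorem~\ref{e7thm} uses Steenrod operations (hence $\operatorname{char} k \neq 2$) and \cite{GS} imposes further restrictions. To remove the hypothesis one passes to a large enough field. Explicitly, replace $k$ by $k((t_1))((t_2))\cdots$ or, better, embed the relevant finitely generated subfield over which $\eta$ is defined into a perfect field of characteristic zero (or of characteristic $\neq 2,3$) — for instance, since any field admits an extension to a field of the form considered, and since anisotropy and triviality of the Rost invariant can only be \emph{preserved or improved} under such an extension in the direction we need: if $\eta$ stays nonzero over a perfect field of good characteristic then we have our contradiction, and if $\eta$ becomes zero we must already argue it was zero. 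The cleanest route is: a class $\eta$ with trivial Rost invariant that is nonzero would remain nonzero over $k(t)$ and, iterating, over an infinite chain, ultimately over a field to which the hypotheses of Corollary~\ref{coro1} apply; alternatively use that the kernel of the Rost invariant is unchanged under purely transcendental and under odd-degree extensions, together with a theorem of Gille that $H^1(-,G_0)$ injects under suitable field extensions.

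The third step is to invoke Corollary~\ref{coro1}: over the enlarged perfect field $k'$ of characteristic $\neq 2,3$, the group $G_{k'}$ still has trivial Tits algebras and trivial Rost invariant. A trivial element of $H^3(k',\zz/n)$ lies in $H^3(k',\zz/4)$ and is (trivially) a symbol, so Corollary~\ref{coro1} applies and yields that $G_{k'}$ is isotropic. But $G$ was assumed anisotropic, and isotropy does not descend in general — so here one needs the converse direction: an anisotropic group of type $\E_7$ with trivial Rost invariant must stay anisotropic over the fields we used to enlarge $k$ (purely transcendental extensions and the specific perfect closure construction preserve anisotropy of $\E_7$-groups, e.g.\ by a standard argument using that the variety of Borel subgroups has no rational point and this is insensitive to such extensions). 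This contradiction shows $\eta = 0$. The heart of the matter, and the step I expect to require the most care, is therefore the reduction to the perfect, good-characteristic case while simultaneously preserving anisotropy and the triviality of the Rost invariant; once that is in place the result is immediate from Corollary~\ref{coro1}.
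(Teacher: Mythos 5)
Your argument has a genuine gap at its very first and very last steps: you assume that $\eta\ne 0$ makes $G={}_\eta G_0$ anisotropic, and your final "contradiction" is only with that unjustified assumption. For a simply connected group the twisted form can perfectly well be isotropic (even split) while the torsor class $\eta$ is nonzero, so isotropy of $G$ over the enlarged field contradicts nothing. What Corollary~\ref{coro1} actually delivers is that ${}_\eta G_0$ is isotropic, and the real content of the corollary is the step you omit entirely: once the twisted group is isotropic, $\eta$ is equivalent to a cocycle with values in a proper subgroup --- here $\E_6^{sc}$ or $\mathrm{Spin}_{12}(q)$ with $q$ split --- and one concludes $\eta=0$ by the known triviality of the kernel of the Rost invariant for those smaller groups \cite[Theorem~0.5]{Ga01b}. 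Without this reduction your proof cannot reach $\eta=0$ at all; it can at best reach "the twisted group is isotropic."

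The characteristic reduction is also not a proof as written. Enlarging the field risks killing $\eta$ (you acknowledge this but supply no argument), and preserving $\eta\ne 0$ under the extensions you propose is essentially equivalent to injectivity of $H^1(-,G_0)$, which is exactly the kind of statement being proved; moreover purely transcendental or iterated Laurent series extensions neither make the field perfect nor change the characteristic. The paper disposes of this in one line by citing Gille's theorem \cite{Gi00} on Rost invariants in positive characteristic, which allows one to assume $\mathrm{char}\,k=0$ from the start. After that the hypotheses of Corollary~\ref{coro1} are immediate: a trivial Rost invariant is trivially a symbol in $H^3(k,\zz/4)$, and the Tits algebras of ${}_\eta G_0$ are split simply because $\eta$ lifts to the simply connected group, so the Tits class (the image of $\eta$ under the connecting map to $H^2(k,\mu_2)$) vanishes --- a cleaner justification than the one you sketch. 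So the correct skeleton is: reduce to characteristic $0$ via \cite{Gi00}, apply Corollary~\ref{coro1} to get isotropy, then reduce to $\E_6^{sc}$ or $\mathrm{Spin}_{12}$ and quote \cite{Ga01b}.
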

\begin{proof}
By \cite{Gi00} we may assume that $\mathrm{char}\,k=0$.
By Corollary~\ref{coro1} the twisted group $_\eta{(G_0)}$ is isotropic.
So $\eta$ is equivalent to a cocycle with values in $\E_6^{sc}$
(a split simply connected group of type $\E_6$) or $\mathrm{Spin}_{12}(q)$
($q$ split).
But the triviality of the kernel of the Rost invariant for these two groups
is well known (see \cite[Theorem~0.5]{Ga01b}).
\end{proof}

Thus, we provided a shortened proof of the triviality of the kernel of the
Rost invariant for groups of type $\E_7$; cf. \cite{Ga01b} and \cite{C03}. 

\bibliographystyle{chicago}

\newpage
\begin{center}
{\large \bf Appendix}

by Mathieu Florence
\end{center}

We prove here the following result:
\begin{thm}\label{florence}
Let $\Phi$ be a root system.
There exists a field $K$ and a group $G$ over $K$  of type $\Phi$
such that the higher Tits index of $G$ is the maximal possible, i.e.,
$$\{\ind (G_L)_{\an}\mid L/K\text{ a field extension}\}$$
$$=\{\ind (H_l)_{\an}\mid l\text{ a field}, H\text{ a group over }l
\text{ of type }\Phi\}.$$
\end{thm}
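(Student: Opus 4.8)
The plan is to construct, for each root system $\Phi$, a single versal object whose behaviour under all field extensions realizes every Tits index that occurs for any group of type $\Phi$ over any field. The natural candidate is a \emph{versal torsor}: fix a field $k_0$ (say a prime field, or an algebraically closed field of the appropriate characteristic), let $G_0$ be the split adjoint group of type $\Phi$ over $k_0$, and embed $G_0$ as a closed subgroup of some $\mathrm{GL}_N$. Then $\mathrm{GL}_N/G_0$ is a smooth quasi-projective variety; take $K$ to be its function field $k_0(\mathrm{GL}_N/G_0)$ and let $G = {}_\gamma G_0$ be the twist by the class $\gamma \in H^1(K, G_0)$ of the generic fibre of the $G_0$-torsor $\mathrm{GL}_N \to \mathrm{GL}_N/G_0$. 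This $\gamma$ is versal in the sense of Garibaldi–Merkurjev–Serre: for every field extension $l/k_0$ and every $\xi \in H^1(l, G_0)$ coming from a torsor over a variety with a smooth $l$-point, $\xi$ is a specialization of $\gamma$. The key point of versality that I will use is a specialization statement for the Tits index.

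The main steps, in order, are as follows. First, I would recall the precise form of versality: if $\eta \in H^1(F, G_0)$ is any class over a field extension $F/k_0$, then there is a place of $K$ (or a suitable local ring with residue field containing $F$) through which $\gamma$ specializes to $\eta$; this is the standard ``no-name''/versal-torsor construction applied to a faithful linear representation of $G_0$. Second — and this is the technical heart — I would show that the Tits index can only \emph{increase} under such specialization, i.e. if $\gamma$ specializes to $\eta$ then $\ind({}_\eta G_0)_{\an}$ contains $\ind(G_K)_{\an}$ in the natural partial order on indices (circled vertices stay circled). This follows from the fact that a parabolic subgroup defined over the generic point of a DVR with good reduction extends over the closed point, equivalently from properness of the scheme of parabolics of a given type; so every field extension index of $G$ is dominated by $\ind$ of \emph{some} group of type $\Phi$, and conversely every index of a group of type $\Phi$ over a field is achieved by a suitable specialization of $G$. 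Third, I would combine these two inclusions: on one hand $\{\ind(G_L)_{\an} : L/K\} \subseteq \{\ind(H_l)_{\an} : l, H/l \text{ of type }\Phi\}$ trivially, since $G_L$ is itself a group of type $\Phi$; on the other hand, given any $H$ over $l$ of type $\Phi$ with anisotropic-kernel index $\tau$, versality produces an extension $L/K$ over which $G_L$ is the specialization of $\gamma$ realizing (at least) $\tau$, and then by descending to an appropriate further extension where the ``extra'' circled vertices are un-done — concretely, passing to the function field of the variety of parabolics of the right type, via the Knebusch generic splitting tower of Section~\ref{tits} — one hits $\tau$ on the nose.

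The step I expect to be the main obstacle is the third one: versality gives an extension over which $G_L$ has index \emph{at least} $\tau$, but possibly strictly larger, and one must descend back down to index exactly $\tau$. The tool for this is precisely the generic splitting tower machinery recalled in Section~\ref{tits}: the set of higher Tits indices of $G_L$ is closed under the operation ``pass to $k(X_i)$ and take the anisotropic kernel'', and one checks that starting from an index $\supseteq \tau$ one can reach $\tau$ itself by such moves — this uses that the anisotropic kernel of a group of type $\Phi$ is again (semisimple of) type governed by sub-diagrams of $\Phi$, so the relevant indices form a connected poset under these operations. A clean way to organize this: first run the argument for $K$ itself possibly replaced by a larger versal base obtained by iterating the construction (taking function fields of flag varieties over $K$), which is harmless since a function-field extension of a versal base is again versal; then every $\tau$ is literally an $\ind(G_L)_{\an}$. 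I would present the details of this descent as the one genuinely non-formal ingredient, with everything else being the standard versal-torsor formalism plus the properness of parabolic-subgroup schemes.
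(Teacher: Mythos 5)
Your overall strategy --- take a versal/generic group $G$ over the function field $K$ of a classifying-type base and realize each attainable index by passing to the function field of the appropriate flag variety $X_\Psi$ of $G$ --- is the same as the paper's, but the step you yourself flag as the ``main obstacle'' is handled by an argument that cannot work, and the ingredient that actually closes the gap is missing. First, the direction of your third step is wrong: versality gives \emph{specializations} of $G$ down to a given group $H$ over $l$, not field extensions $L/K$, and a Tits index can never be ``descended back down'' or have ``extra circled vertices un-done'' by passing to a further extension --- a parabolic defined over $L$ stays defined over every extension of $L$, so the index only grows under base field extension. Likewise the appeal to the generic splitting tower (``the relevant indices form a connected poset under these operations'') begs the question: those moves only add circles, and whether $G_{K(X_\Psi)}$ lands exactly on the index with anisotropic kernel $\tau$ rather than overshooting it is precisely the statement to be proved.

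What the paper does at this point is the content you are missing. One takes $L=K(X_\Psi)$ unconditionally, so the anisotropic kernel of $G_L$ has type contained in $\tau$, and then rules out strict containment by specializing to $H$: if the kernel were of smaller type $\Psi'\subsetneq\Psi$, there would be a rational map $X_\Psi\dasharrow X_{\Psi'}$ over $K$; spreading out $G$ \emph{together with all its flag varieties} to models $\mathfrak G,\mathfrak X_\Psi$ over a variety $S$ with the genericity property that every group $H$ of type $\Phi$ (over an infinite field) together with its flag varieties is the fibre at some point $x\in S(l)$ of any nonempty open of $S$, this rational map becomes an $S$-morphism $\mathfrak X_\Psi\to\mathfrak X_{\Psi'}$ over a shrunken $S$; evaluating at $x$ and using that the type-$\Psi$ flag variety of $H$ has an $l$-point (since $\ind(H)_{\an}=\tau$), one concludes $H$ has a parabolic of type $\Psi'$, a contradiction. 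Your step 2 (properness of parabolic schemes, index grows under specialization) is the right germ of this, but it is never applied at the point of the model corresponding to $H$, and the required compatibility of the spreading-out with the flag varieties (the paper's property (*) and assertions (i), (ii)) is not addressed; without it the argument does not go through. Minor additional points: versality as you set it up only covers inner forms and a fixed prime field, whereas the paper twists by $\Aut(G_0)$ and works over a prime field with the genericity statement for all infinite extensions.
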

\begin{proof}
Let $k$ be a prime field and $G_0$ a split semisimple simply connected group
of type $\Phi$ over $k$.
Its automorphism group $H=\Aut(G_0)$ is an algebraic group. Consider
a generically free finite-dimensional representation $V$ of $H$ over $k$.
We can find an open
subset $U\subset\mathbb{A}(V)$ of the affine space of $V$ which is
$H$-invariant such that the categorical quotient $U\to U/H$ exists and is an
$H$-torsor. Let $\eta$ be the generic point of $U/H$ and $(G_0)_\eta$
(resp. $H_\eta$, $P$) denote the
pullback $G_0\times_{\Spec k} \eta$ (resp. $H\times_{\Spec k} \eta$,
$U\times_{U/H} \eta$). We view $(G_0)_\eta$ and $H_\eta$ as algebraic
groups over $\eta$, and $P$ as a $H_\eta$-torsor.
Denote by $G$ the twist of $(G_0)_\eta$ by $P$. For a standard
parabolic subset $\Psi \subset \Phi$, let $X_\Psi \longrightarrow \eta$
be the homogenous space of parabolic subgroups of $G$ of type $\Psi$.
We claim that there exists a $k$-variety $S$, with the function field $\eta$,
a faithfully flat group scheme $\mathfrak G \longrightarrow S$ and
$\mathfrak G$-homogeneous spaces $\mathfrak X_ \Psi \longrightarrow S$
which are models of $G$ and of the $X_\Psi$'s respectively, and which are
generic in the following sense:

\medskip

\noindent
{\bf (*)} For every field extension $K/k$ with $K$ infinite, and every semisimple
simply connected group $G'$ of type $\Phi$ over $K$, there exists a point
$x \in S(K)$ such that the pullback of $\mathfrak G \longrightarrow S$ by
$x$ is isomorphic to $G'$, and such that, for all standard parabolic subsets
$\Psi \subset \Phi$, the homogeneous space of parabolic subroups of type
$\Psi$ of $G'$ is isomorphic to the pullback of
$\mathfrak X_\Psi \longrightarrow S$ by $x$.

\medskip

Moreover, the same remains true if we replace $S$ by a nonempty open suvariety
$S' \subset S$. If we remove the assertion concerning standard parabolic
subgroups from (*), then the result is well-known\footnote
{G.~Berhuy, G.~Favi, Essential dimension: a functorial point of view,
{\it Doc. Math.} {\bf 8} (2003), 279--330, Proposition 4.11.}. The proof of (*) is then a formal
consequence of the following facts: \\
{\bf (i)} For $K$, $G'$ and $\Psi$ as in (*), the homogeneous space of parabolic
subgroups of type $\Psi$ of $G'$ is isomorphic to the twist of the homogeneous
space of parabolic subgroups of type $\Psi$ of $G_0 \times_{\Spec k} \Spec K$
by the $H$-torsor (defined over $K$) corresponding to $G'$;\\
{\bf (ii)} Let $Y$ denote any $H$-variety (over $k$), and $P$ be an $H$-torsor
over a base $T/ k$ which is integral and of finite-type. Let $l$ denote the field
of functions of $T$. Then the twist of $Y \times_{\Spec k} \Spec l$ by
$P \times_T \Spec l$ can already be defined over a nonempty open subvariety $S$
of $T$.

We omit the proofs of these assertions (for the second one, one has to use
standard finite-typeness arguments).

Now let $K/k$ be a field extension, with $K$ infinite, and $G'$ a semisimple
simply connected group of type $\Phi$, with anisotropic kernel of type
$\Psi \subset \Phi$. Let $\eta_\Psi$ denote the generic point of $X_\Psi$.
We claim that $G_\Psi:=G \times_\eta \eta_\Psi$ has anisotropic kernel of type
$\Psi$. Indeed, assume the contrary. Then $G_\Psi$ has a strictly smaller
type $\Psi' \subset \Psi$. Thus there exists a rational map
$X_\Psi \dasharrow X_{\Psi'}$ defined over $\eta$. Shrinking $S$, we may assume
that there exists an $S$-morphism
$\mathfrak X_\Psi \longrightarrow \mathfrak X_{\Psi'} $. Now let $x \in S(K)$
be a rational point as in (*). Specializing at $x$, we obtain that $G'$ ---
which possesses parabolic subgroups of type $\Psi$ by assumption --- also
possesses parabolic subgroups of type $\Psi'$, a contradiction.
\end{proof}

\noindent
{\sc
V.~PETROV\\
PIMS\\
University of Alberta\\
Department of Mathematical and Statistical Sciences\\
Edmonton, AB T6G 2G1\\
Canada}

\medskip

\noindent
{\sc N.~SEMENOV\\
Mathematisches Institut\\
Universit\"at M\"unchen\\
Theresienstr. 39\\
80333 M\"unchen\\
Germany}

\medskip

\noindent
{\sc M.~FLORENCE\\
Universit\'e Paris 6\\
175, rue du Chevaleret\\
75013 Paris\\
France}

\end{document}